\newtheorem{theorem}{Theorem}[section]
\newtheorem{lemma}[theorem]{Lemma}
\newtheorem{proposition}[theorem]{Proposition}
\theoremstyle{remark}
\title{$k$-free lattice points in random walks}
\author{Kui Liu}
\address{School of Mathematics and Statistics, Qingdao University, 308 Ningxia Road, Shinan District, Qingdao, Shandong, China}
\email{liukui@qdu.edu.cn}
\author{ShunQi Ma}
\address{School of Mathematics and Statistics, Qingdao University, 308 Ningxia Road, Shinan District, Qingdao, Shandong, China}
\email{msqyzjy@163.com}
\keywords{random walk, $k$-free lattice points, visible lattice points}
\subjclass[2010]{60G50, 11H06, 11N37}
\begin{document}
\maketitle

\begin{abstract}
Let $\mathbb{Z}^2$ be the two-dimensional integer lattice. For an integer $k\geq 1$, a non-zero lattice point is $k$-free if the greatest common divisor of its coordinates is a $k$-free number. We consider the proportions of $k$-free and twin $k$-free lattice points on a path of an $\alpha$-random walker in $\mathbb{Z}^2$. Using the second-moment method and tools from analytic number theory, we prove that these two proportions are $1/\zeta(2k)$ and $\prod_{p}(1-2p^{-2k})$, respectively, where $\zeta$ is the Riemann zeta function and the infinite product takes over all primes.
\end{abstract}
\maketitle
\section{Introduction}
For an integer $k\geq 1$, a positive integer is said to be $k$-free if it is not divisible by any $k$-th power of primes. In the two-dimensional integer lattice $\mathbb{Z}^2$, we say a non-zero lattice point $(m,n)$ is $k$-free if $\gcd(m,n)$ is $k$-free, where $\gcd$ is the greatest common divisor function. Particularly, a non-zero lattice point $(m,n)\in\mathbb{Z}^2$ is $1$-free if and only if $\gcd(m,n)=1$, which is equivalent to $(m,n)$ is a visible lattice point (from the origin). In 2013, Pleasants and Huck \cite{PH} showed that the density of $k$-free lattice points in $\mathbb{Z}^2$ is $1/\zeta(2k)$, where $\zeta$ is the Riemann zeta function. We also refer to \cite{HB} for dynamical properties of $k$-free lattice points and \cite{BH,BMP,ASJM,PH} for related topics. 

In 2015, Cilleruelo, Fern{\'a}ndez, and Fern{\'a}ndez \cite{CFF} first considered visible (i.e. $1$-free) lattice points in $\mathbb{Z}^2$ from the viewpoint of random walk. For $0\textless \alpha \textless 1$, an $\alpha$-random walk starting at the origin of $\mathbb{Z}^2$ is defined by
\begin{align}\label{eq:the definition of random walk}
P_{i+1}=P_i+
\begin{cases}
	(1,0)& \text{with\ probability $\alpha$},\\
	(0,1)& \text{with\ probability $1-\alpha$},
	\end{cases}
\end{align}
for $i=0,1,2,\cdots$, where $P_i=(x_i,y_i)$ is the coordinate of the $\alpha$-random walker at the $i$-th step and $P_0=(0,0)$. This means, a random walker moving up and right in the integer lattice from the origin, with probabilities $\alpha$ and $1-\alpha$, respectively. Cilleruelo, Fern{\'a}ndez, and Fern{\'a}ndez showed that the proportion of $r$ consecutively visible lattice points on a path of an $\alpha$-random walker is almost surely $b_r(\alpha)\prod_{p\geq r}(1-rp^{-2})$, where $b_r(\alpha)$ is a polynomial in $\alpha$ with rational coefficients that can be explicitly computed. Particularly, they obtained $b_r(\alpha)=1$ for $r=1,2$. It follows that the asymptotic proportions of visible ($r=1$) and twin visible ($r=2$) lattice points on a path of an $\alpha$-random walker are independent of the probability $\alpha$. But for $r\geq 3$, this phenomenon does not hold.

In this paper, we generalize the above results of \cite{CFF} for $r=1,2$ to the case of $k$-free lattice points in $\mathbb{Z}^2$. It is also reasonable to consider the cases of $r\geq 3$, but here we will not go further in this direction.

For an $\alpha$-random walk given by~\eqref{eq:the definition of random walk}, consider a sequence  of random variables $\{X_i\}_{i\in\mathbb{N}}$ with
$$ X_i=\left\{
\begin{aligned}
	&1,\ \  P_i\ {\rm{is}}\ k{\rm{-free}}, \\
	&0,\  \ {\rm{otherwise}}.
\end{aligned}
\right.
$$
For $n\geq 1$, define
$$
\overline{S}_{n}=\overline{S}(n,k,\alpha):=\frac{X_1+X_2+\dots+X_n}{n},
$$
then $\overline{S}_{n}$ indicates the proportion of $k$-free lattice points in the first $n$ steps of an $\alpha$-random walk.
\begin{theorem}\label{thm:S_n=}
Under the above notations, for any $\alpha\in (0,1)$ and integer $k\geq 1$, we have
$$
\lim_{n\rightarrow +\infty}\overline{S}_{n}=\frac{1}{\zeta(2k)}
$$
almost surely, where $\zeta$ is the Riemann zeta function.
\end{theorem}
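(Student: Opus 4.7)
The plan is to apply the second moment method combined with Möbius inversion. The starting observation is that since $x_i + y_i = i$ is deterministic, the condition $d^k\mid\gcd(x_i,y_i)$ forces $d^k\mid i$ and reduces to the single divisibility $d^k\mid x_i$; thus
$$
X_i \;=\; \sum_{d^k\mid \gcd(x_i,y_i)} \mu(d) \;=\; \sum_{d\,:\,d^k\mid i} \mu(d)\,\mathbf{1}[d^k\mid x_i].
$$

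For the mean, I use that $x_i \sim \mathrm{Bin}(i,\alpha)$, so by character orthogonality on $\mathbb{Z}/d^k$ combined with the characteristic function of a sum of i.i.d.\ Bernoulli$(\alpha)$ variables, one obtains $\mathbb{P}(d^k\mid x_i) = 1/d^k + O(\rho(\alpha,d)^{\,i})$ with $\rho(\alpha,d)<1$. Choosing a cutoff $D=D(n)$, splitting the Möbius sum at $d\le D$ and $d>D$, and using the standard tail bound $\sum_{d>D}\mu(d)^2/d^{2k}\ll D^{1-2k}$ gives
$$
\frac{1}{n}\sum_{i=1}^{n}\mathbb{E}[X_i] \;\longrightarrow\; \sum_{d=1}^{\infty}\frac{\mu(d)}{d^{2k}} \;=\; \frac{1}{\zeta(2k)}.
$$

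For the variance, I exploit the independent increments of the walk: for $i<j$, the increment $x_j - x_i \sim \mathrm{Bin}(j-i,\alpha)$ is independent of $x_i$. A two-variable Fourier analysis then yields
$$
\mathbb{P}(d^k\mid x_i,\,e^k\mid x_j) \;=\; \frac{1}{d^k e^k} + \text{(terms exponentially small in }\min(i,\,j-i)\text{)},
$$
which, substituted into $\mathbb{E}[X_iX_j]$ and compared against $\mathbb{E}[X_i]\mathbb{E}[X_j]$, produces a covariance bound that, after summation over $1\le i,j\le n$, gives $\mathrm{Var}(\overline{S}_n)=O(n^{-\eta})$ for some $\eta>0$. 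A Chebyshev plus Borel--Cantelli argument along a polynomially sparse subsequence $n_\ell = \ell^m$ (with $m$ chosen so that $\sum_\ell n_\ell^{-\eta}<\infty$) then yields $\overline{S}_{n_\ell}\to 1/\zeta(2k)$ almost surely, and the monotonicity of $S_n=X_1+\cdots+X_n$ together with the sandwich $(n_\ell/n_{\ell+1})\overline{S}_{n_\ell}\le\overline{S}_n\le(n_{\ell+1}/n_\ell)\overline{S}_{n_{\ell+1}}$ for $n_\ell\le n<n_{\ell+1}$ upgrades convergence to the full sequence.

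The main obstacle is the uniform control of $\mathrm{Cov}(X_i,X_j)$: the Fourier residuals must be bounded simultaneously when $j-i$ is small (so the independent-increments gain is weak), when the moduli $d^k$ and $e^k$ share large common factors (so that the character contributions do not cleanly separate), and when $d$ or $e$ is large relative to $i$ or $j$ (where the trivial bound on $\mathbb{P}(d^k\mid x_i)$ must be used). The technical heart of the proof is calibrating the Möbius cutoff $D=D(n)$ so that the main term $1/\zeta(2k)$ survives while all error contributions to $n^{-2}\sum_{i,j}\mathrm{Cov}(X_i,X_j)$ remain $o(1)$ uniformly in the above regimes.
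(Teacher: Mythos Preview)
Your plan is essentially the paper's: the second-moment method (your Borel--Cantelli plus sandwich argument is exactly the content of the paper's Lemma~\ref{lemma:sum}), M\"obius inversion to detect $k$-freeness (your identity $X_i=\sum_{d^k\mid i}\mu(d)\,\mathbf 1[d^k\mid x_i]$ is the paper's arithmetic function $f_k$ in different clothing, since one checks $f_k(i)=\sum_{d^k\mid i}\mu(d)/d^k$), and equidistribution of $x_i$ modulo $d^k$ via the independent-increment structure.

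The one point to tighten is the equidistribution error. Your bound $\mathbb P(d^k\mid x_i)=1/d^k+O(\rho(\alpha,d)^i)$ is correct but not uniform: $\rho(\alpha,d)\to 1$ as $d\to\infty$ (roughly $1-c\alpha(1-\alpha)/d^{2k}$), so for $d^k$ comparable to $i$ it gives nothing, and this is what forces the cutoff $D(n)$ and the delicate case analysis you describe. The paper sidesteps this entirely by using the \emph{uniform} polynomial bound $\bigl|\mathbb P(x_n\equiv r\bmod m)-1/m\bigr|\le B_\alpha/\sqrt n$ for all $m\le n$ and all residues $r$ (Lemma~\ref{lemma: the sum of mod}, itself a consequence of summing your Fourier terms more carefully). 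With this in hand one gets $\mathbb E(X_i)=f_k(i)+O(\tau_3(i)/\sqrt i)$ and $\mathbb E(X_iX_j)=f_k(i)f_k(j)+O(\tau_3(i)\tau_3(j)/\sqrt i)+O(\tau_3(j)/\sqrt{j-i})$ directly, yielding $\mathbb V(\overline S_n)=O(n^{-1/2+\varepsilon})$ without any cutoff or common-factor bookkeeping.
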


We remark that the limit proportion in Theorem \ref{thm:S_n=} is independent on the probability $\alpha$. Particularly, if $k=1$, then Theorem \ref{thm:S_n=} gives Theorem A of \cite{CFF}. We also remark that the limit proportion in Theorem \ref{thm:S_n=} is the same as the density of $k$-free lattice points in $\mathbb{Z}^2$.

For a path of an $\alpha$-random walk given by~\eqref{eq:the definition of random walk}, if two consecutive lattice points $P_i,P_{i+1}$ are both $k$-free, then we say $P_i,P_{i+1}$ are twin $k$-free lattice points. For $n\geq 1$, define
$$
\overline{T}_{n}=\overline{T}(n,k,\alpha):=\frac{X_1X_2+X_2X_3+\dots+X_{n}X_{n+1}}{n},
$$
then $\overline{T}_{n}$ indicates the proportion of twin $k$-free lattice points in the first $n+1$ steps of an $\alpha$-random walk.
\begin{theorem}\label{thm:T_n=}
Under the above notations, for any $0<\alpha<1$ and integer $k\geq 1$, we have
$$
\lim_{n\rightarrow+\infty}\overline{T}_{n}=\prod_{p}\Big(1-\frac{2}{p^{2k}}\Big)
$$
almost surely, where the infinite product takes over all primes.
\end{theorem}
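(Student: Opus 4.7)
The plan is to follow the same second-moment framework used for Theorem~\ref{thm:S_n=}, with the additional complication that each term $X_i X_{i+1}$ now couples two consecutive lattice points of the walk. Using the Möbius identity $\mathbf{1}\{n\text{ is }k\text{-free}\} = \sum_{d^k \mid n} \mu(d)$ together with $y_i = i - x_i$, we may write $X_i = \sum_{d^k \mid i,\, d^k \mid x_i} \mu(d)$, and expanding both factors yields
\begin{align*}
\mathbb{E}[X_i X_{i+1}] \;=\; \sum_{\substack{d_1, d_2 \\ d_1^k \mid i,\, d_2^k \mid (i+1)}} \mu(d_1)\mu(d_2)\, \mathbb{P}\big(d_1^k \mid x_i,\; d_2^k \mid x_{i+1}\big).
\end{align*}
Since $\gcd(i, i+1) = 1$ forces $\gcd(d_1, d_2) = 1$, the Chinese remainder theorem reduces the inner event to a single congruence on $x_i$ modulo $(d_1 d_2)^k$, with either of two admissible residues depending on $\xi_{i+1} = x_{i+1} - x_i \in \{0,1\}$.

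\textbf{Mean.} Fourier analysis of $\mathbb{E}[e^{2\pi \mathrm{i} t x_i}] = ((1-\alpha) + \alpha e^{2\pi \mathrm{i} t})^i$, whose modulus is strictly less than $1$ for $t \notin \mathbb{Z}$ whenever $0 < \alpha < 1$, shows that the joint probability above equals $(d_1 d_2)^{-k}$ with an error that is exponentially small in $i$, uniformly for $d_1 d_2$ in a mildly growing range. The indicator $\mathbf{1}\{d_1^k \mid i,\, d_2^k \mid (i+1)\}$ has Cesàro density $(d_1 d_2)^{-k}$ by CRT, and combining:
\begin{align*}
\lim_{n \to \infty} \mathbb{E}[\overline{T}_n] \;=\; \sum_{\gcd(d_1, d_2) = 1} \frac{\mu(d_1)\mu(d_2)}{(d_1 d_2)^{2k}} \;=\; \sum_{D \text{ squarefree}} \frac{\mu(D)\, 2^{\omega(D)}}{D^{2k}} \;=\; \prod_p \left(1 - \frac{2}{p^{2k}}\right),
\end{align*}
because each squarefree $D$ admits exactly $2^{\omega(D)}$ ordered coprime factorizations and the resulting Dirichlet series is multiplicative.

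\textbf{Variance and conclusion.} To upgrade to almost sure convergence, I would bound $\mathrm{Var}(\overline{T}_n) = n^{-2} \sum_{i,j=1}^{n} \mathrm{Cov}(X_i X_{i+1},\, X_j X_{j+1})$. Expanding both factors via Möbius produces a quadruple sum indexed by $(d_1, d_2, d_3, d_4)$; for $i < j$ with $j - i$ much larger than $\log(d_1 d_2 d_3 d_4)$, the increment $P_j - P_{i+1}$ is independent of $(P_i, P_{i+1})$, and the same Fourier decay makes the four-point probability factorize as the product of the two pair-probabilities up to exponentially small error, so the covariance contribution from such pairs is $o(1)$. Contributions from small $|i - j|$ and from large $d$'s are handled by elementary bounds on the density of integers divisible by a large $k$-th power, after truncating the Möbius sum at $d \leq n^\eta$. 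This yields $\mathrm{Var}(\overline{T}_n) = O(n^{-\delta})$ for some $\delta > 0$; Chebyshev's inequality along a polynomial subsequence $n_\ell = \lceil \ell^L \rceil$ (with $L > 1/\delta$) together with Borel--Cantelli gives almost sure convergence on that subsequence, and the trivial bound $|\overline{T}_{n+1} - \overline{T}_n| \ll 1/n$ interpolates to the full sequence. \textbf{The main obstacle} is precisely the quadruple Möbius-weighted variance sum: one needs a factorization estimate strong enough to survive cancellation against $\mu(d_1)\mu(d_2)\mu(d_3)\mu(d_4)$, while simultaneously tracking the short-range coupling between $P_i$ and $P_{i+1}$ (and between $P_j$ and $P_{j+1}$) at two widely separated times.
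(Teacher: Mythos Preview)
Your outline is correct and would go through, but it takes a different technical route from the paper.  You expand $X_i$ via $\mathbf 1\{m\text{ is }k\text{-free}\}=\sum_{d^k\mid m}\mu(d)$ and control the resulting congruence probabilities by Fourier--analysing the characteristic function $\big((1-\alpha)+\alpha e^{2\pi \mathrm i t}\big)^i$, with a truncation of the $d$-sum.  The paper instead decomposes the condition ``$\gcd(l,i)$ is $k$-free'' according to the \emph{value} of the gcd and, via Lemma~\ref{lemma: two k-free} (itself built on the equidistribution estimate Lemma~\ref{lemma: the sum of mod}), obtains directly $\mathbb E(X_iX_{i+1})=f_k(i)f_k(i+1)+O_\alpha(i^{-1/2+\varepsilon})$ with the arithmetic function $f_k$ of \eqref{eq: definition of f_k(n)}; the constant then falls out of Lemma~\ref{lem: mean of f_k(n)f_k(n+1)}.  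For the variance the paper repeats the same device: conditioning on the walk between steps $i{+}1$ and $j$ yields $\mathbb E(X_iX_{i+1}X_jX_{j+1})=f_k(i)f_k(i+1)f_k(j)f_k(j+1)+O_\alpha(\cdots)$, so the squared mean cancels exactly and one reads off $\mathbb V(\overline T_n)=O(n^{-1/2+\varepsilon})$.  The practical difference is that Lemma~\ref{lemma: the sum of mod} already gives an error $O_\alpha(1/\sqrt n)$ \emph{uniform in the modulus $d\le n$}---precisely what summing your nonzero Fourier frequencies over all $j/d$ would produce---so using it as a black box removes the ``mildly growing range'' restriction and the tail bookkeeping you flag as the main obstacle, at the price of the extra arithmetical Lemma~\ref{lem: mean of f_k(n)f_k(n+1)} to identify the constant.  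Your concluding Chebyshev/Borel--Cantelli/subsequence--interpolation step is exactly the content of Lemma~\ref{lemma:sum}.
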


Again, the limit proportion in Theorem \ref{thm:T_n=} is independent on the probability $\alpha$.

\subsection{Notations}
As usual, for real functions $f$ and $g$, we use the expressions $f=O(g)$ and $f \ll g$ to mean $|f| \leq Cg$ for some constant $C>0$. When this constant $C$ depends on some parameter $\varepsilon$, we write $f\ll_{\varepsilon}g$ and $f=O_{\varepsilon}(g)$. We use $\mathbb{R}$, $\mathbb{Z}$ and $\mathbb{N}$ to denote the sets of all real numbers, integers and positive integers, respectively. Moreover, we use $\mathbb{P}$, $\mathbb{E}$ and $\mathbb{V}$ to denote taking probability, expectation and variance, respectively.

\section{Preliminaries}
\subsection{Tools from probability theory}
We need the following two results from probability theory. Lemma \ref{lemma:sum} is the second-moment method (see Lemma 2.5, \cite{CFF}) and Lemma \ref{lemma:The local  central limit theorem} is the local central limit theorem (see Theorem 3.5.2, \cite{D}). 
\begin{lemma}\label{lemma:sum}
For a sequence of uniformly bounded random variables $(W_i)_{i\geq1}$, let $\overline{S}_{n}=(W_1+\dots+W_n)/n$. If the expectation $\mathbb{E}(\overline{S}_n)$ and the variance $\mathbb{V}(\overline{S}_{n})$ of $\overline{S}_{n}$ satisfy
$$
\lim_{n\to \infty}\mathbb{E}(\overline{S}_n)=\mu
$$
and
$$
\mathbb{V}(\overline{S}_{n})\ll_{\delta} n^{-\delta}
$$
for some $\delta>0$ and any $n\geq 1$, then we have
$$
\lim_{n\to \infty}\overline{S}_{n}=\mu
$$
almost surely.
\end{lemma}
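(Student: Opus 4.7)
My approach would combine Chebyshev's inequality with the Borel--Cantelli lemma, passing to a sufficiently sparse subsequence so that the variance bound becomes summable, and then interpolating using the uniform boundedness of the $W_i$.

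First I would centre by setting $\mu_n := \mathbb{E}(\overline{S}_n)$. Since $\mu_n \to \mu$ by hypothesis, it suffices to show $\overline{S}_n - \mu_n \to 0$ almost surely. For any $\varepsilon > 0$, Chebyshev's inequality yields
$$
\mathbb{P}\bigl(|\overline{S}_n - \mu_n| > \varepsilon\bigr) \leq \frac{\mathbb{V}(\overline{S}_n)}{\varepsilon^2} \ll_\delta \frac{1}{\varepsilon^2 n^\delta}.
$$
Unless $\delta > 1$ this right-hand side is not summable in $n$, so the first Borel--Cantelli lemma cannot be applied directly along the full sequence. This mild decay of the variance is really the only obstacle in the proof.

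The remedy is to thin out the indices. Fix an integer $m$ with $m\delta > 1$ (for instance $m = \lceil 2/\delta \rceil$) and set $n_j := j^m$. Then $\sum_j n_j^{-\delta} = \sum_j j^{-m\delta} < \infty$, so applying the first Borel--Cantelli lemma to the events $\{|\overline{S}_{n_j} - \mu_{n_j}| > 1/\ell\}$ for each fixed $\ell \in \mathbb{N}$, and intersecting the resulting full-measure sets over all $\ell$, gives $\overline{S}_{n_j} \to \mu$ almost surely.

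Finally I would close up the gaps between consecutive subsequence indices. Let $M$ be a uniform bound for $|W_i|$ and write $S_n := W_1 + \cdots + W_n$. For $n_j \leq n < n_{j+1}$,
$$
|\overline{S}_n - \overline{S}_{n_j}| \leq \frac{|S_n - S_{n_j}|}{n} + \frac{|S_{n_j}|}{n_j}\cdot\frac{n - n_j}{n} \leq \frac{2M(n_{j+1} - n_j)}{n_j},
$$
and since $(n_{j+1} - n_j)/n_j = ((j+1)^m - j^m)/j^m = O(1/j) \to 0$, these oscillations vanish uniformly within each block. Thus almost-sure convergence along $(n_j)$ extends to the full sequence, proving the lemma. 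Note that no independence assumption on the $W_i$ is used, which is essential since in the intended applications the $X_i$ are highly dependent.
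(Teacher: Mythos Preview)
The paper does not actually prove this lemma; it simply quotes it as ``the second-moment method'' and cites Lemma~2.5 of \cite{CFF}. Your argument---Chebyshev along a polynomially sparse subsequence $n_j=j^m$ with $m\delta>1$, Borel--Cantelli, and then interpolation via the uniform bound on the $W_i$---is correct and is precisely the standard proof one would expect to find in the cited reference. The interpolation inequality and the estimate $(n_{j+1}-n_j)/n_j=O(1/j)$ are both valid as written, so there is nothing to add.
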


\begin{lemma}\label{lemma:The local central limit theorem}
Let $\alpha\in (0,1)$ be fixed. For any integer $n\geq 1$, we have 
\begin{align*}
\max_{0\leq l\leq n}{n \choose  l}\alpha^l(1-\alpha)^{n-l}=O_{\alpha}\Big( \frac{1}{\sqrt n}\Big), 
\end{align*}
where ${n \choose  l},\ 0\leq l\leq n$ are binomial coefficients. 
\end{lemma}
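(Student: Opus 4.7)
The plan is to locate the mode of the binomial probability mass function $p_l := \binom{n}{l}\alpha^l(1-\alpha)^{n-l}$ and then estimate its value there via Stirling's formula. Since $p_l\geq 0$, the quantity to bound is simply $p_{l^*}$ where $l^*$ realises the maximum. To find $l^*$, I would compute the ratio of consecutive terms
$$\frac{p_{l+1}}{p_l}=\frac{(n-l)\alpha}{(l+1)(1-\alpha)},$$
which is $\geq 1$ precisely when $l\leq (n+1)\alpha-1$. Hence $p_l$ is unimodal and attains its maximum at $l^*=\lfloor (n+1)\alpha\rfloor$, which satisfies $|l^*-n\alpha|\leq 1$.

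For the main estimate, restrict first to $n$ large enough in terms of $\alpha$ that $1\leq l^*\leq n-1$. Applying Stirling's formula $m!=\sqrt{2\pi m}\,(m/e)^m(1+O(1/m))$ to each of $n!$, $l^*!$ and $(n-l^*)!$ gives
$$\binom{n}{l^*}\alpha^{l^*}(1-\alpha)^{n-l^*}=\frac{1}{\sqrt{2\pi}}\sqrt{\frac{n}{l^*(n-l^*)}}\Big(\frac{n\alpha}{l^*}\Big)^{l^*}\Big(\frac{n(1-\alpha)}{n-l^*}\Big)^{n-l^*}\bigl(1+O_\alpha(1/n)\bigr).$$
Writing $l^*=n\alpha+\theta$ with $|\theta|\leq 1$, the square-root prefactor equals $(2\pi n\alpha(1-\alpha))^{-1/2}(1+O_\alpha(1/n))$, which already furnishes the desired $1/\sqrt{n}$ decay. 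It remains to show that the entropy-type factor $\bigl(\tfrac{n\alpha}{l^*}\bigr)^{l^*}\bigl(\tfrac{n(1-\alpha)}{n-l^*}\bigr)^{n-l^*}$ is $O_\alpha(1)$; substituting $l^*=n\alpha+\theta$, each base equals $1+O(1/n)$ with an $\alpha$-dependent implied constant, and expanding the logarithm to first order one checks that the total exponent stays bounded uniformly in $n$.

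For the finitely many $n$ below the threshold, and for the degenerate configurations where $l^*\in\{0,n\}$ (which only occur for small $n$ once $\alpha$ is fixed in $(0,1)$), the trivial bound $p_{l^*}\leq 1\leq C(\alpha)/\sqrt{n}$ holds after enlarging $C(\alpha)$ if necessary. The main obstacle is the bookkeeping in the previous paragraph: one must carefully track how the $O(1)$ displacement $\theta$ between $l^*$ and $n\alpha$ propagates through $\bigl(1+\theta/(n\alpha)\bigr)^{l^*}$ and its companion, confirming via the Taylor expansion $\log(1+x)=x+O(x^2)$ and the identity $l^*\cdot\tfrac{\theta}{n\alpha}+(n-l^*)\cdot\tfrac{-\theta}{n(1-\alpha)}=O_\alpha(1/n)$ that the product is $O_\alpha(1)$ rather than growing with $n$. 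Everything else reduces to a routine application of Stirling's approximation.
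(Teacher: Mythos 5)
Your argument is correct. Note, however, that the paper does not prove this lemma at all: it is quoted directly as the local central limit theorem (Theorem 3.5.2 of Durrett's book), so the "official" proof is an appeal to a much stronger known result, namely the full asymptotic $\binom{n}{l}\alpha^l(1-\alpha)^{n-l}\sim (2\pi n\alpha(1-\alpha))^{-1/2}e^{-x^2/2}$ for $l=n\alpha+x\sqrt{n\alpha(1-\alpha)}$. What you do instead is give a self-contained elementary derivation: locate the mode $l^*=\lfloor(n+1)\alpha\rfloor$ via the ratio test on consecutive terms, then apply Stirling's formula at $l^*$ and verify that the entropy factor is $O_\alpha(1)$ because the first-order terms in $\log(1+\theta/(n\alpha))$ and $\log(1-\theta/(n(1-\alpha)))$ cancel. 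All the steps check out, including the dispatching of the boundary cases $l^*\in\{0,n\}$ and of small $n$ by the trivial bound $p_{l^*}\leq 1$. (A tiny simplification: once each base is $1+O_\alpha(1/n)$, the product is automatically $\bigl(1+O_\alpha(1/n)\bigr)^{O(n)}=O_\alpha(1)$, so the careful cancellation of the linear terms in $\theta$, while true, is not strictly needed for the upper bound.) Your route buys a proof from first principles that only uses Stirling, at the cost of some bookkeeping; the paper's route buys brevity by importing a textbook theorem whose full strength (a matching asymptotic with Gaussian profile) is far more than the one-sided $O_\alpha(n^{-1/2})$ bound actually used.
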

\subsection{Divisor functions}

For $l\geq 2$, let
$$
\tau_l(n):=\sum\limits_{n=d_1d_2...d_l}1
$$ be the $l$-fold divisor function. By (1.81) of \cite{HE}, we have 
\begin{align}\label{eq: bound for tau_l(n)}
\tau_l(n)=O_{l,\varepsilon}( n^\varepsilon)
\end{align}
for any $\varepsilon>0$. By the above upper bound and partial summation, we have the following estimates involving the $3$-fold divisor function, which will be used in the proof of our theorems.

\begin{lemma}\label{lem: estimetes involving tau_3}
For any integer $n\geq 1$ and any $\varepsilon>0$, we have
$$
\sum\limits_{1\leq i< j\leq n}\frac{\tau_3(j)\tau_3(i)}{\sqrt{i}}=O_{\varepsilon}(n^{3/2+\varepsilon})\quad {\rm and}\quad \sum\limits_{1\leq i< j\leq n}\frac{\tau_3(j)}{\sqrt{j-i}}=O_{\varepsilon}(n^{3/2+\varepsilon}).
$$
\end{lemma}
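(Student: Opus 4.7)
The plan is to reduce both sums to tractable single sums using the bound $\tau_3(n)=O_\varepsilon(n^\varepsilon)$ from \eqref{eq: bound for tau_l(n)}, and then to handle what remains via elementary estimates (essentially the integral comparison that underlies partial summation). Since the savings we need are only a factor of $n^{1/2}$ beyond the trivial count of $n^2$ terms, no subtle cancellation is required; the divisor bound already provides enough slack, and the main care is in organising the order of summation.

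For the first sum, I would drop the constraint $i<j$ in favour of the weaker constraint $i\leq n$, $j\leq n$, which yields the product bound
\begin{equation*}
\sum_{1\leq i<j\leq n}\frac{\tau_3(j)\tau_3(i)}{\sqrt{i}}\;\leq\;\Bigl(\sum_{j=1}^{n}\tau_3(j)\Bigr)\Bigl(\sum_{i=1}^{n}\frac{\tau_3(i)}{\sqrt{i}}\Bigr).
\end{equation*}
Applying $\tau_3(m)\ll_{\varepsilon} m^{\varepsilon}$ in each factor, the first factor is $\ll_\varepsilon n^{1+\varepsilon}$ and the second factor is $\ll_\varepsilon \sum_{i\leq n} i^{-1/2+\varepsilon}\ll_\varepsilon n^{1/2+\varepsilon}$ by comparison with an integral. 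Multiplying and relabeling $\varepsilon$ gives the desired $O_\varepsilon(n^{3/2+\varepsilon})$.

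For the second sum, I would swap the order by introducing $m=j-i$, so that
\begin{equation*}
\sum_{1\leq i<j\leq n}\frac{\tau_3(j)}{\sqrt{j-i}}\;=\;\sum_{j=2}^{n}\tau_3(j)\sum_{m=1}^{j-1}\frac{1}{\sqrt{m}}\;\ll\;\sum_{j=2}^{n}\tau_3(j)\sqrt{j},
\end{equation*}
using $\sum_{m\leq M}m^{-1/2}\ll M^{1/2}$. One more application of $\tau_3(j)\ll_\varepsilon j^\varepsilon$ then bounds the right-hand side by $\sum_{j\leq n} j^{1/2+\varepsilon}\ll_\varepsilon n^{3/2+\varepsilon}$.

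There is no real obstacle here: the only point that might require a moment's attention is being careful about the order in which the divisor bound and the integral comparison are applied (the bound is strongest when applied inside the $j$-summation after the $i$- or $m$-summation has been performed). Since the exponent $\varepsilon$ is arbitrary, absorbing the logarithmic-type losses that arise from replacing $\tau_3$ by $j^\varepsilon$ into a slightly larger $\varepsilon$ at the end is routine, and no further refinement is needed.
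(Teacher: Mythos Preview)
Your proposal is correct and follows exactly the route the paper indicates: the paper does not spell out a proof but simply remarks that the estimates follow from the bound \eqref{eq: bound for tau_l(n)} together with partial summation, which is precisely what you do. Your decoupling of the first sum into a product and your substitution $m=j-i$ in the second are the natural implementations of that hint, and nothing further is needed.
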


\begin{lemma}\label{lem: estimetes (2) involving tau_3}
For any integer $n\geq 1$ and any $\varepsilon>0$, we have
$$
\sum_{1\leq i \leq n}\sum_{i+1<j\leq n}\frac{\tau_3(i)\tau_3(i+1)\tau_3(j)\tau_3(j+1)}{\sqrt{i}}=O_{\varepsilon}(n^{3/2+\varepsilon})
$$
and
$$
\sum_{1\leq i \leq n}\sum_{i+1<j\leq n}\frac{\tau_3(j)\tau_3(j+1)}{\sqrt{j-i-1}}=O_{\varepsilon}(n^{3/2+\varepsilon}).
$$
\end{lemma}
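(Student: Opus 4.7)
The plan is to apply the pointwise bound $\tau_3(m)=O_\varepsilon(m^\varepsilon)$ from~\eqref{eq: bound for tau_l(n)} to each of the four (respectively two) divisor factors, after which both sums reduce to elementary index sums of the same flavor as those handled in Lemma~\ref{lem: estimetes involving tau_3}. The key structural observation is that the factors $1/\sqrt{i}$ and $1/\sqrt{j-i-1}$ contribute only $O(\sqrt{n})$ when summed over the appropriate index rather than $O(n)$, and this is precisely what keeps the final bound at $n^{3/2+\varepsilon}$ instead of something like $n^{2+\varepsilon}$.

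For the first estimate, since $i,\,i+1,\,j,\,j+1\le n+1$, each of the four $\tau_3$ factors is $O_\varepsilon(n^\varepsilon)$. Pulling these out, executing the $j$-sum trivially (at most $n$ terms) and the $i$-sum via $\sum_{i\le n}i^{-1/2}\ll\sqrt{n}$ gives a total of $O_\varepsilon(n^{4\varepsilon}\cdot n\cdot \sqrt{n})=O_\varepsilon(n^{3/2+4\varepsilon})$. Relabelling $\varepsilon$ yields the claim. Note that there is no need here to invoke the average-order asymptotic $\sum_{m\le x}\tau_3(m)\sim x(\log x)^2$ via partial summation: the target has an arbitrary $\varepsilon$ on the exponent, so the crude pointwise bound is more than enough.

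For the second estimate, I would swap the order of summation to fix $j$ first:
\begin{align*}
\sum_{1\le i\le n}\sum_{i+1<j\le n}\frac{\tau_3(j)\tau_3(j+1)}{\sqrt{j-i-1}}=\sum_{3\le j\le n}\tau_3(j)\tau_3(j+1)\sum_{i=1}^{j-2}\frac{1}{\sqrt{j-i-1}}.
\end{align*}
The substitution $m=j-i-1$ transforms the inner sum into $\sum_{m=1}^{j-2}m^{-1/2}\ll\sqrt{j}\le\sqrt{n}$. Applying $\tau_3(j)\tau_3(j+1)\ll_\varepsilon n^{2\varepsilon}$ and summing over the $O(n)$ admissible values of $j$ then produces $O_\varepsilon(n^{3/2+2\varepsilon})$, which gives the stated bound after renaming $\varepsilon$.

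I do not anticipate any genuine obstacle; both bounds are essentially mechanical once the divisor bound~\eqref{eq: bound for tau_l(n)} is invoked, and the template is already present in Lemma~\ref{lem: estimetes involving tau_3}. The only point requiring a little care is correctly identifying the range $j\ge i+2$ (so $j\ge 3$ and $i\le j-2$) when interchanging the order of summation in the second estimate, so that the shifted summation variable $m=j-i-1$ starts at $m=1$ and the $1/\sqrt{\cdot}$ singularity is harmless.
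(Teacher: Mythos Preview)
Your proposal is correct and matches the paper's own treatment: the paper does not supply a detailed proof but merely states that both Lemma~\ref{lem: estimetes involving tau_3} and Lemma~\ref{lem: estimetes (2) involving tau_3} follow from the pointwise bound~\eqref{eq: bound for tau_l(n)} together with partial summation. Your argument makes this explicit using direct summation rather than partial summation, which is if anything slightly more elementary and fully sufficient given the $n^\varepsilon$ slack in the target bound.
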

The factor $n^{\varepsilon}$ in Lemmas \ref{lem: estimetes involving tau_3} and \ref{lem: estimetes (2) involving tau_3} can be replaced by a power of $\log n$, but this is not necessary here.

We also use the unitary divisor function $\tau^*(n)$, which is defined by
\begin{align}\label{def: unitary divisor function}
\tau^*(n):=\sum\limits_{\substack{n=n_1n_2\\ \gcd(n_1,n_2)=1}}1.
\end{align}
Note that $\tau^*(n)$ is multiplicative and
\begin{align}\label{eq: values of tau*(pm)}
\tau^*(p^m)=2
\end{align}
for any prime power $p^m$. Obviously, for $n\geq 1$ we have
\begin{align}\label{eq: bound for tau*(n)}
|\tau^*(n)|\leq \tau_2(n)=O_{\varepsilon}(n^{\varepsilon}).
\end{align}

\subsection{Estimates for sums of binomial probabilities}
To prove our theorems, we also need two results for sums of binomial probabilities. The following result is Lemma 2.1 of \cite{CFF}.
\begin{lemma}\label{lemma: the sum of mod}
For any $\alpha\in (0,1)$, there is a constant $B_\alpha\textgreater0$ such that for integers $n\geq 1$,  $1\leq d\leq n$ and $r\in\left\lbrace 0,1,\dots,d-1\right\rbrace $, there holds
$$\left| \sum_{l\equiv r\bmod d}
{n \choose  l}\alpha^l(1-\alpha)^{n-l}-\frac{1}{d}\right| \leq \frac{B_\alpha}{\sqrt{n}}.
$$
\end{lemma}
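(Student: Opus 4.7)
The plan is to apply a roots of unity filter on $\mathbb{Z}/d\mathbb{Z}$. Setting $\omega=e^{2\pi i/d}$, the identity $\mathbf{1}[l\equiv r\bmod d]=\frac{1}{d}\sum_{j=0}^{d-1}\omega^{j(l-r)}$ combined with the binomial theorem transforms the sum into
\begin{equation*}
\sum_{l\equiv r\bmod d}\binom{n}{l}\alpha^l(1-\alpha)^{n-l}=\frac{1}{d}\sum_{j=0}^{d-1}\omega^{-jr}\bigl(\omega^j\alpha+1-\alpha\bigr)^n.
\end{equation*}
The $j=0$ term contributes exactly $1/d$, so the task reduces to bounding the remaining exponential sum in absolute value by $B_\alpha/\sqrt{n}$.

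A direct computation yields $|\omega^j\alpha+1-\alpha|^2=1-4\alpha(1-\alpha)\sin^2(\pi j/d)$, so the elementary inequality $1-x\leq e^{-x}$ gives $|\omega^j\alpha+1-\alpha|^n\leq\exp(-2\alpha(1-\alpha)n\sin^2(\pi j/d))$. Using the symmetry $j\leftrightarrow d-j$ together with the concavity estimate $\sin(\pi j/d)\geq 2j/d$ valid for $1\leq j\leq d/2$, I would reduce matters to
\begin{equation*}
\frac{1}{d}\Bigl|\sum_{j=1}^{d-1}\omega^{-jr}(\omega^j\alpha+1-\alpha)^n\Bigr|\leq\frac{2}{d}\sum_{j=1}^{\lfloor d/2\rfloor}\exp\bigl(-8\alpha(1-\alpha)nj^2/d^2\bigr).
\end{equation*}

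To finish, I would compare this finite sum to the Gaussian integral $\int_0^\infty e^{-8\alpha(1-\alpha)nx^2/d^2}\,dx$, which evaluates to a constant multiple of $d/\sqrt{n}$. The factor $d$ arising from the integral exactly cancels the $1/d$ prefactor, leaving a bound of order $1/\sqrt{n}$ with a constant depending only on $\alpha$, which is precisely what the lemma asserts. The main obstacle I anticipate is this uniformity in $d$: a crude term-by-term estimate based on the local central limit theorem in Lemma~\ref{lemma:The local central limit theorem} would lose a factor $\sqrt{n}/d$ and therefore fail when $d$ is small, whereas simply bounding each $|\omega^j\alpha+1-\alpha|^n$ by $1$ fails when $d$ is large. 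The averaging provided by the Gaussian comparison is what allows both regimes to be handled by a single estimate.
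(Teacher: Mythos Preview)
Your argument is correct. The roots-of-unity filter combined with the binomial theorem gives exactly the formula you wrote, the computation $|\omega^j\alpha+1-\alpha|^2=1-4\alpha(1-\alpha)\sin^2(\pi j/d)$ is accurate, and the Gaussian comparison $\sum_{j\geq 1}e^{-cj^2}\leq\int_0^\infty e^{-cx^2}\,dx=\tfrac12\sqrt{\pi/c}$ with $c=8\alpha(1-\alpha)n/d^2$ produces the factor $d/\sqrt{n}$ that cancels the prefactor $1/d$, yielding $B_\alpha=\sqrt{\pi/(8\alpha(1-\alpha))}$.

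Note, however, that the paper does not itself prove this lemma: it is quoted verbatim as Lemma~2.1 of \cite{CFF}, with no argument given. So there is no proof in the paper to compare against; you have supplied a complete, self-contained argument where the authors simply cite the literature. Your method is in fact the standard one (and is essentially what is done in \cite{CFF}): the discrete Fourier transform on $\mathbb{Z}/d\mathbb{Z}$ is the natural tool precisely because, as you observe, it handles small and large $d$ uniformly, whereas invoking Lemma~\ref{lemma:The local central limit theorem} term-by-term would only give $O(d/\sqrt{n})$.
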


Using Lemma \ref{lemma: the sum of mod}, we derive the following result.
\begin{lemma}\label{lemma:mean1}
For any $\alpha\in(0,1)$ and integer $l\geq 1$, let $u_j, 1\leq j\leq l$ be pairwise coprime positive integers. If positive integers $d_j$ satisfy $d_j\mid u_j$ for any $1\leq j\leq l$, then for $n\geq 1$ and any integers $a_1,\cdots,a_j$, we have
$$
\sum_{\substack{0\leq s\leq n\\ \gcd(s+a_j,u_j)=d_j,\\1\leq j\leq l}}{n \choose s}\alpha^s(1-\alpha)^{n-s}=\frac{1}{d_1\cdots d_k}\sum\limits_{\substack{r_j\mid (u_j/d_j),\\1\leq j\leq l}}\frac{\mu(r_1)\cdots\mu(r_l)}{r_1\cdots r_l}+O\Big(\frac{1}{\sqrt{n}}\prod\limits_{1\leq j\leq l}\tau_2(u_j/d_j)\Big),
$$
where the implied $O$-constant depends only on $\alpha$ and $l$.
\end{lemma}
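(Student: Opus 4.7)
The plan is to detect the constraint $\gcd(s+a_j,u_j)=d_j$ via M\"obius inversion, reducing the sum to a combination of sums over arithmetic progressions, and then to apply Lemma \ref{lemma: the sum of mod} to each such sum.

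First I would write $\gcd(s+a_j,u_j)=d_j$ as $d_j\mid (s+a_j)$ together with $\gcd((s+a_j)/d_j,\,u_j/d_j)=1$, and encode the coprimality via the identity
$$
\mathbf{1}[\gcd((s+a_j)/d_j,u_j/d_j)=1]\;=\;\sum_{r_j\mid (u_j/d_j),\; r_j\mid (s+a_j)/d_j}\mu(r_j).
$$
Substituting this for every $j$ and swapping the order of summation, the original sum becomes
$$
\sum_{\substack{r_j\mid (u_j/d_j)\\ 1\leq j\leq l}}\mu(r_1)\cdots\mu(r_l)\sum_{\substack{0\leq s\leq n\\ d_jr_j\mid (s+a_j),\;1\leq j\leq l}}{n\choose s}\alpha^s(1-\alpha)^{n-s}.
$$

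Next I would exploit the pairwise coprimality of the $u_j$: since $d_jr_j\mid u_j$ the moduli $d_1r_1,\ldots,d_lr_l$ are themselves pairwise coprime, so by the Chinese Remainder Theorem the simultaneous congruences $d_jr_j\mid (s+a_j)$ collapse into a single congruence $s\equiv r \pmod{D}$ with $D=d_1r_1\cdots d_lr_l$ for an appropriate residue $r\in\{0,1,\dots,D-1\}$. Applying Lemma \ref{lemma: the sum of mod} to this inner sum gives
$$
\sum_{\substack{0\leq s\leq n\\ s\equiv r\!\!\!\pmod{D}}}{n\choose s}\alpha^s(1-\alpha)^{n-s}=\frac{1}{D}+O_\alpha\!\Bigl(\frac{1}{\sqrt n}\Bigr),
$$
valid uniformly so long as $D\leq n$; the remaining range $D>n$ is handled trivially since each binomial probability is at most $1$ and the number of terms is controlled by the bound $D\leq u_1\cdots u_l$, which is absorbed into the stated error using \eqref{eq: bound for tau*(n)} (or simply by noting that in that range $1/D \ll 1/\sqrt n$ is still acceptable after weighting by $\prod_j\tau_2(u_j/d_j)$).

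Inserting the above into the outer M\"obius sum produces the main term
$$
\frac{1}{d_1\cdots d_l}\sum_{\substack{r_j\mid (u_j/d_j)\\ 1\leq j\leq l}}\frac{\mu(r_1)\cdots\mu(r_l)}{r_1\cdots r_l},
$$
which matches the claimed main term (interpreting the printed $d_1\cdots d_k$ as $d_1\cdots d_l$), while the error contributes at most
$$
\frac{1}{\sqrt n}\prod_{j=1}^{l}\sum_{r_j\mid (u_j/d_j)}|\mu(r_j)|\;\ll_{\alpha,l}\;\frac{1}{\sqrt n}\prod_{j=1}^{l}\tau_2(u_j/d_j),
$$
as required. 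The main obstacle is essentially bookkeeping: keeping track of the CRT identification and verifying that the $n^{-1/2}$ error, which gets multiplied by the number of divisor tuples $(r_1,\dots,r_l)$, can indeed be bounded by $n^{-1/2}\prod_j\tau_2(u_j/d_j)$ with an implied constant depending only on $\alpha$ and $l$.
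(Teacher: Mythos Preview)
Your overall strategy---M\"obius inversion to rewrite $\gcd(s+a_j,u_j)=d_j$, then CRT to collapse the simultaneous congruences to a single one modulo $D=\prod_j d_jr_j$, then Lemma~\ref{lemma: the sum of mod}---is exactly the paper's approach, and your main-term and error computations for the range $D\leq n$ are correct.

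The gap is in your treatment of the complementary range $D>n$. There the inner sum over $s$ has at most one term, and you bound that term by~$1$. This gives only
\[
|S_2|\;\leq\;\sum_{\substack{r_j\mid(u_j/d_j)\\ D>n}}1\;\leq\;\prod_{j=1}^{l}\tau_2(u_j/d_j),
\]
which is \emph{not} $O\bigl(n^{-1/2}\prod_j\tau_2(u_j/d_j)\bigr)$ in general (take for instance $u_j=d_j$ for all $j$ with $\prod_j d_j>n$: then the divisor product equals~$1$ but your bound on $S_2$ is~$1$). Your parenthetical alternative ``$1/D\ll 1/\sqrt{n}$'' does not repair this, since $1/D$ is the main term supplied by Lemma~\ref{lemma: the sum of mod} only when $D\leq n$; for $D>n$ that lemma is not applicable and the inner sum is \emph{not} bounded by $1/D$.

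What the paper does here is invoke Lemma~\ref{lemma:The local  central limit theorem} (the local central limit theorem), which gives $\binom{n}{s}\alpha^s(1-\alpha)^{n-s}=O_\alpha(n^{-1/2})$ uniformly in $s$. Since the inner sum for $D>n$ contains at most one term, this yields
\[
S_2\;=\;O_{\alpha}\Bigl(\frac{1}{\sqrt{n}}\Bigr)\sum_{\substack{r_j\mid(u_j/d_j)\\ D>n}}1\;=\;O_{\alpha,l}\Bigl(\frac{1}{\sqrt{n}}\prod_{j=1}^{l}\tau_2(u_j/d_j)\Bigr),
\]
which is exactly the required bound. Add this ingredient and your argument is complete.
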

\begin{proof}
For simplicity, denote the left hand side of the above equation by
$$
S:=\sum_{\substack{0\leq s\leq n\\ \gcd(s+a_j,u_j)=d_j,1\leq j\leq l}}C_{\alpha}(n,s), 
$$
where
\begin{align}\label{eq: C_alpha(n,s)=}
C_{\alpha}(n,s):={n \choose s}\alpha^s(1-\alpha)^{n-s}.
\end{align}
Then we have
$$
S=\sum_{\substack{0\leq s\leq n\\ s\equiv -a_j\bmod d_j,\\ \gcd((s+a_j)/d_j,u_j/d_j)=1,\\ 1\leq j\leq l}}C_{\alpha}(n,s).
$$
Applying the formula
\begin{align*}
\sum\limits_{r\mid n}\mu(r)=
\begin{cases}
1,\quad &n=1,\\
0,\quad &{\rm otherwise},
\end{cases}
\end{align*}
where $\mu$ is the M\"{o}bius function, we obtain
$$
S=\sum_{\substack{0\leq s\leq n\\ s\equiv -a_j\bmod d_j,\ 1\leq j\leq l}}C_{\alpha}(n,s)\prod_{1\leq j\leq l}\bigg(\sum\limits_{r_j\mid \gcd((s+a_j)/d_j,u_j/d_j)}\mu(r_j)\bigg).
$$
Changing the order of summations, we obtain
$$
S=\sum\limits_{r_j\mid (u_j/d_j),1\leq j\leq l}\mu(r_1)\cdots\mu(r_l)\sum_{\substack{0\leq s\leq n\\ s\equiv -a_j\bmod r_jd_j,\ 1\leq j\leq l}}C_{\alpha}(n,s).
$$
Note that $u_1,\cdots,u_l$ are pairwise coprime and $r_jd_j\mid u_j$ for $1\leq j\leq l$. Then we have $r_1d_1,\cdots,r_ld_l$ are pairwise coprime. By the Chinese remainder theorem, there exists an integer
$$
a\in\Big\{0,\cdots,\prod\limits_{1\leq j\leq l}r_jd_j-1\Big\}
$$
such that 
$$
S=\sum\limits_{r_j\mid (u_j/d_j),1\leq j\leq l}\mu(r_1)\cdots\mu(r_l)\sum_{\substack{0\leq s\leq n\\ s\equiv a\bmod \prod\limits_{1\leq j\leq l}r_jd_j}}C_{\alpha}(n,s).
$$
Breaking the sum $S$ into two sums according to $\prod_{1\leq j\leq l}{r_jd_j}\leq n$ or not, we write
\begin{align}\label{eq: S=S1+S2}
S=S_1+S_2,
\end{align}
where
$$
S_1=\sum\limits_{\substack{r_j\mid (u_j/d_j),1\leq j\leq l\\ \prod\limits_{1\leq j\leq l}{r_jd_j}\leq  n}}\mu(r_1)\cdots\mu(r_l)\sum_{\substack{0\leq s\leq n\\ s\equiv a\bmod \prod\limits_{1\leq j\leq l}r_jd_j}}C_{\alpha}(n,s)
$$
and
$$
S_2=\sum\limits_{\substack{r_j\mid (u_j/d_j),1\leq j\leq l\\ \prod\limits_{1\leq j\leq l}{r_jd_j}>n}}\mu(r_1)\cdots\mu(r_l)\sum_{\substack{0\leq s\leq n\\ s\equiv a\bmod \prod\limits_{1\leq j\leq l}r_jd_j}}C_{\alpha}(n,s).
$$
For $S_1$, by Lemma \ref{lemma: the sum of mod}, we obtain
$$
S_1=\frac{1}{d_1\cdots d_l}\sum\limits_{\substack{r_j\mid (u_j/d_j),1\leq j\leq l\\ \prod\limits_{1\leq j\leq l}{r_jd_j}\leq  n}}\frac{\mu(r_1)\cdots\mu(r_l)}{r_1\cdots r_l}+O_{\alpha,l}\Big(\frac{1}{\sqrt{n}}\prod\limits_{1\leq j\leq l}\tau_2(u_j/d_j)\Big).
$$
The first term in the above equation is equal to
$$
\frac{1}{d_1\cdots d_l}\sum\limits_{r_j\mid (u_j/d_j),1\leq j\leq l}\frac{\mu(r_1)\cdots\mu(r_l)}{r_1\cdots r_l}+O_{\alpha,l}\Big(\frac{1}{n}\prod\limits_{1\leq j\leq l}\tau_2(u_j/d_j)\Big),
$$
since
$$
\sum\limits_{\substack{r_j\mid (u_j/d_j),1\leq j\leq l\\ \prod\limits_{1\leq j\leq l}{r_jd_j}>n}}\frac{\mu(r_1)\cdots\mu(r_l)}{r_1\cdots r_l}\ll_l \frac{d_1\cdots d_l}{n}\prod\limits_{1\leq j\leq l}\tau_2(u_j/d_j). 
$$
It follows that
\begin{align}\label{eq: S_1=result}
S_1=\frac{1}{d_1\cdots d_l}\sum\limits_{r_j\mid (u_j/d_j),1\leq j\leq l}\frac{\mu(r_1)\cdots\mu(r_l)}{r_1\cdots r_l}+O_{\alpha,l}\Big(\frac{1}{\sqrt{n}}\prod\limits_{1\leq j\leq l}\tau_2(u_j/d_j)\Big).
\end{align}
For $S_2$, by Lemma \ref{lemma:The local central limit theorem}, we obtain
\begin{align}\label{eq: S_2<<result}
S_2=O_{\alpha,l}\Big(\frac{1}{\sqrt{n}}\prod\limits_{1\leq j\leq l}\tau_2(u_j/d_j)\Big). 
\end{align}
Now our desired result follows from inserting \eqref{eq: S_1=result} and \eqref{eq: S_2<<result} into \eqref{eq: S=S1+S2}.
\end{proof}
\subsection{Two arithmetic functions}
For $k\geq 1$, define
\begin{align}\label{eq:definition of g_k(n)}
g_k(n):=\sum_{\substack{rd=n\\d\ {\rm{is}} \ k-{\rm{free}}}}\mu (r),
\end{align}
where $\mu$ is the M\"{o}bius function. Note that $g_k(n)$ is multiplicative and
$$
g_k(p^m)=\begin{cases}
-1,\quad m=k,\\
0,\quad {\rm otherwise}, 
\end{cases}
$$
for any prime power $p^m$. It follows that
\begin{align}\label{eq: euler product of gk/n^2}
\sum_{n=1}^{\infty}\frac{g_k(n)}{n^2}=\prod_{p}\Big(1-\frac{1}{p^{2k}}\Big)=\frac{1}{\zeta(2k)}.
\end{align}
Here and in the following, the symbol $\prod_p$ always means taking product over all primes. Similarly, since $\tau^*(n)$ is also multiplicative, we have
\begin{align}\label{eq: euler product of gk tau*/n^2}
\sum_{n=1}^{\infty}\frac{g_k(n)\tau^*(n)}{n^2}=\prod_{p}\Big(1-\frac{2}{p^{2k}}\Big),
\end{align}
where we have used \eqref{eq: values of tau*(pm)}. Obviously, for $n\geq 1$ we have
\begin{align}\label{eq: bound for gk(n)}
|g_k(n)|\leq \tau_2(n)=O_{\varepsilon}(n^{\varepsilon}).
\end{align}

For $k\geq 1$, define
\begin{align}\label{eq: definition of f_k(n)}
f_k(n)=\sum_{\substack{rd\mid n\\d\ {\rm{is}}\ k-{\rm{free}}}}\frac{\mu (r)}{rd}.
\end{align}
Obviously, for $n\geq 1$ we have
\begin{align}\label{eq: bound for fk(n)}
|f_k(n)|\leq \tau_3(n)=O_{\varepsilon}(n^{\varepsilon}).
\end{align}

\begin{lemma}\label{lem: mean of f_k(n)}
For any $\varepsilon>0$, we have
$$
\sum\limits_{1\leq n\leq N}f_k(n)=\frac{N}{\zeta(2k)}+O_{\varepsilon}\big(N^{\varepsilon}\big)
$$
for $N\geq 1$.
\end{lemma}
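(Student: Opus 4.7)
The plan is to swap the order of summation so that the sum over $n$ becomes a counting problem, recognize the resulting main coefficient as exactly the Dirichlet series at $s=2$ of the multiplicative function $g_k$, and then estimate the tails using the divisor bound \eqref{eq: bound for tau*(n)}.

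First, from the definition \eqref{eq: definition of f_k(n)},
\begin{align*}
\sum_{1\leq n\leq N} f_k(n)
= \sum_{1\leq n\leq N}\sum_{\substack{rd\mid n\\ d\text{ is } k\text{-free}}}\frac{\mu(r)}{rd}
= \sum_{\substack{rd\leq N\\ d\text{ is } k\text{-free}}}\frac{\mu(r)}{rd}\Big\lfloor\frac{N}{rd}\Big\rfloor.
\end{align*}
Writing $\lfloor N/(rd)\rfloor = N/(rd) - \{N/(rd)\}$ splits the sum into a main term
\begin{align*}
M := N\sum_{\substack{rd\leq N\\ d\text{ is } k\text{-free}}}\frac{\mu(r)}{(rd)^2}
\end{align*}
and an error term of absolute value bounded by $\sum_{rd\leq N,\, d\text{ }k\text{-free}}(rd)^{-1}$.

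Next, I would collect terms according to $m=rd$. By the definition \eqref{eq:definition of g_k(n)}, for each $m$ we have $\sum_{rd=m,\, d\text{ }k\text{-free}}\mu(r)=g_k(m)$, so $M = N\sum_{m\leq N} g_k(m)/m^2$. Using \eqref{eq: euler product of gk/n^2} and the tail estimate obtained from \eqref{eq: bound for gk(n)},
\begin{align*}
N\sum_{m>N}\frac{g_k(m)}{m^2}\ll_{\varepsilon} N\sum_{m>N}\frac{m^{\varepsilon}}{m^2}\ll_{\varepsilon} N^{\varepsilon},
\end{align*}
which yields $M = N/\zeta(2k)+O_{\varepsilon}(N^{\varepsilon})$.

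For the error term, since the number of factorizations $m=rd$ with $d$ being $k$-free is at most $\tau_2(m)$, one has
\begin{align*}
\sum_{\substack{rd\leq N\\ d\text{ is } k\text{-free}}}\frac{1}{rd}\leq \sum_{m\leq N}\frac{\tau_2(m)}{m}\ll_{\varepsilon} N^{\varepsilon},
\end{align*}
again by \eqref{eq: bound for tau*(n)} (or by the classical bound $\sum_{m\leq N}\tau_2(m)/m\ll (\log N)^2$). Combining the main-term and error-term estimates completes the proof.

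The only delicate point is handling the tail of $\sum_{m}g_k(m)/m^2$ cleanly; but since $|g_k(m)|\ll_{\varepsilon} m^{\varepsilon}$ and the exponent in the denominator is $2$, the tail is comfortably smaller than $N^{\varepsilon-1}$, so no further input (such as a sharper mean value of $g_k$) is needed.
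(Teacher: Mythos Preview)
Your proof is correct and follows essentially the same route as the paper: both swap the order of summation to reduce to $N\sum_{m\leq N}g_k(m)/m^2$ plus an $O_\varepsilon(N^\varepsilon)$ error, complete the tail using \eqref{eq: bound for gk(n)}, and invoke \eqref{eq: euler product of gk/n^2}. The only cosmetic difference is that the paper first packages the identity $f_k(n)=\sum_{w\mid n}g_k(w)/w$ (equation \eqref{eq:f_k(i) and g_k(w)}) and then swaps, whereas you swap directly from the definition and collect into $g_k(m)$ afterwards; also, your opening reference to \eqref{eq: bound for tau*(n)} should be \eqref{eq: bound for gk(n)}.
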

\begin{proof}
Let $rd=w$ in \eqref{eq: definition of f_k(n)}. Then we have
\begin{align}\label{eq:f_k(i) and g_k(w)}
f_k(n)=\sum_{w\mid n}\frac{1}{w}\sum_{\substack{rd=w\\d\ {\rm{is}} \ k-{\rm{free}}}}\mu (r)=\sum_{w\mid n}\frac{g_k(w)}{w}.
\end{align}
It follows that
$$
\sum\limits_{1\leq n\leq N}f_k(n)=\sum\limits_{1\leq n\leq N}\sum_{w\mid n}\frac{g_k(w)}{w}=\sum_{w\leq N}\frac{g_k(w)}{w}\sum_{\substack{1\leq n\leq N\\ n\equiv 0 \bmod w}}1,
$$
where we have changed the order of summations. It follows that
\begin{align*}
\sum\limits_{1\leq n\leq N}f_k(n)=\sum_{w\leq N}\frac{g_k(w)}{w}\Big(\frac{N}{w}+O(1) \Big), 
\end{align*}
which gives
$$
\sum\limits_{1\leq n\leq N}f_k(n)=N\sum_{w\leq N}\frac{g_k(w)}{w^2}+O\Big(\sum_{w\leq N}\frac{|g_k(w)|}{w}\Big).
$$
Using bound \eqref{eq: bound for gk(n)} to estimate the $O$-term, we obtain
$$
\sum\limits_{1\leq n\leq N}f_k(n)=N\sum_{w\leq N}\frac{g_k(w)}{w^2}+O_{\varepsilon}(N^{\varepsilon}).
$$
Extending the range of the sum over $w$, we obtain
$$
\sum\limits_{1\leq n\leq N}f_k(n)=N\sum_{w=1}^{\infty}\frac{g_k(w)}{w^2}+O\Big(N\sum_{w>N}\frac{|g_k(w)|}{w^2}\Big)+O_{\varepsilon}(N^{\varepsilon}).
$$
Using bound \eqref{eq: bound for gk(n)} again to estimate the first $O$-term in the above, we obtain
$$
\sum\limits_{1\leq n\leq N}f_k(n)=N\sum_{w=1}^{\infty}\frac{g_k(w)}{w^2}+O_{\varepsilon}(N^{\varepsilon}).
$$
This together with \eqref{eq: euler product of gk/n^2} gives our desired result.
\end{proof}

\begin{lemma}\label{lem: mean of f_k(n)f_k(n+1)}
For any $\varepsilon>0$, we have
$$
\sum\limits_{1\leq n\leq N}f_k(n)f_k(n+1)=N\prod_{p}\Big(1-\frac{2}{p^{2k}}\Big)+O_{\varepsilon}(N^{\varepsilon})
$$
for $N\geq 1$.
\end{lemma}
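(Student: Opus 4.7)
The plan is to mimic the proof of Lemma \ref{lem: mean of f_k(n)}, using the Dirichlet expansion $f_k(n) = \sum_{w \mid n} g_k(w)/w$ from \eqref{eq:f_k(i) and g_k(w)} but now on both factors. Expanding and swapping orders of summation, I would write
$$
\sum_{1\leq n\leq N} f_k(n) f_k(n+1) = \sum_{w_1\leq N}\sum_{w_2\leq N+1} \frac{g_k(w_1)\, g_k(w_2)}{w_1 w_2}\, \#\bigl\{1\leq n\leq N : w_1\mid n,\ w_2\mid n+1\bigr\}.
$$
The congruences $n\equiv 0\pmod{w_1}$ and $n\equiv -1\pmod{w_2}$ are compatible precisely when $\gcd(w_1,w_2)=1$, in which case the Chinese Remainder Theorem gives a unique residue class mod $w_1 w_2$, and the count of relevant $n$ is $N/(w_1 w_2) + O(1)$.

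Substituting this, the main contribution is
$$
N \sum_{\substack{w_1,w_2 \geq 1\\ \gcd(w_1,w_2)=1}} \frac{g_k(w_1)\, g_k(w_2)}{w_1^2 w_2^2},
$$
after extending to the infinite range. Grouping by $w = w_1 w_2$, the inner sum is the unitary convolution $(g_k \star^* g_k)(w)$, which is multiplicative. At prime powers one computes $(g_k\star^* g_k)(p^m) = 2g_k(p^m)$ for $m\geq 1$, which equals $-2$ when $m=k$ and $0$ otherwise. Hence the double series factors as the Euler product $\prod_p(1 - 2/p^{2k})$, matching the claimed main term. Equivalently, one can insert $\tau^*(w)$ via \eqref{def: unitary divisor function} and invoke \eqref{eq: euler product of gk tau*/n^2} directly.

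It remains to control two error contributions. First, the $O(1)$ from CRT contributes at most
$$
\sum_{w_1\leq N}\sum_{w_2\leq N+1}\frac{|g_k(w_1)g_k(w_2)|}{w_1 w_2} \ll \Big(\sum_{w\leq N}\frac{\tau_2(w)}{w}\Big)^{\!2} \ll_\varepsilon N^\varepsilon,
$$
using the bound \eqref{eq: bound for gk(n)}. Second, the tails of the main series (where $w_1 > N$ or $w_2 > N+1$) are bounded by
$$
N\sum_{w_1>N}\frac{|g_k(w_1)|}{w_1^2}\sum_{w_2\geq 1}\frac{|g_k(w_2)|}{w_2^2} \ll_\varepsilon N\cdot N^{-1+\varepsilon} = N^\varepsilon,
$$
and symmetrically for $w_2$. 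Combining the main term with these error estimates yields the claimed asymptotic.

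The routine parts are the CRT count and the tail estimates, which follow exactly as in Lemma \ref{lem: mean of f_k(n)}. The only mildly subtle step is the coprimality constraint $\gcd(w_1,w_2)=1$ forced by the system of congruences; this is precisely what converts the ordinary convolution into the unitary one and produces the factor $\tau^*(p^m)=2$, accounting for the $2/p^{2k}$ in the Euler product rather than $1/p^{2k}$.
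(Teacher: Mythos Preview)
Your proposal is correct and follows essentially the same route as the paper: expand both factors via \eqref{eq:f_k(i) and g_k(w)}, swap sums, apply CRT with the coprimality constraint $\gcd(w_1,w_2)=1$, and identify the resulting series with $\sum_w g_k(w)\tau^*(w)/w^2$ via \eqref{eq: euler product of gk tau*/n^2}. The only cosmetic difference is that the paper first substitutes $w=w_1w_2$ and works with a truncated unitary divisor count $\tau^*(w;N)$ before passing to the full $\tau^*(w)$, whereas you extend to the infinite range directly and bound the tails; the estimates involved are the same.
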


\begin{proof}

It follows from \eqref{eq:f_k(i) and g_k(w)} that  
\begin{align*}
\sum\limits_{1\leq n\leq N}f_k(n)f_k(n+1)=\sum\limits_{1\leq n\leq N}\sum_{w_1\mid n}\frac{g_k(w_1)}{w_1}\sum_{w_2\mid( n+1)}\frac{g_k(w_2)}{w_2}.
\end{align*}
Changing the order of summations, we obtain
$$
\sum\limits_{1\leq n\leq N}f_k(n)f_k(n+1)=\sum_{\substack{w_1\leq N,w_2\leq N+1}}\frac{g_k(w_1)g_k(w_2)}{w_1w_2}\sum_{\substack{1\leq n\leq N\\ n\equiv 0\bmod w_1\\ n\equiv -1\bmod w_2}}1.
$$
It follows from the Chinese remainder theorem that
\begin{align*}
\sum\limits_{1\leq n\leq N}f_k(n)f_k(n+1)=\sum_{\substack{w_1\leq N,w_2\leq N+1\\(w_1,w_2)=1}}\frac{g_k(w_1)g_k(w_2)}{w_1w_2}\Big(\frac{N}{w_1w_2}+O(1)\Big),
\end{align*}
which gives
$$
\sum\limits_{1\leq n\leq N}f_k(n)f_k(n+1)=N\sum_{\substack{w_1\leq N,w_2\leq N+1\\(w_1,w_2)=1}}\frac{g_k(w_1)g_k(w_2)}{(w_1w_2)^2}	+O\Big(\sum_{w_1\leq N}\frac{|g_k(w_1)|}{w_1}\sum_{w_2\leq N+1}\frac{|g_k(w_2)|}{w_2}\Big).  
$$
Using bound \eqref{eq: bound for gk(n)} to estimate the $O$-term, we obtain
\begin{align*}
\sum\limits_{1\leq n\leq N}f_k(n)f_k(n+1)=N\sum_{\substack{w_1\leq N,w_2\leq N+1\\(w_1,w_2)=1}}\frac{g_k(w_1)g_k(w_2)}{(w_1w_2)^2}+O_{\varepsilon}(N^{\varepsilon}).
\end{align*}
Since $g_k(n)$ is multiplicative, we have
$$
\sum\limits_{1\leq i\leq n}f_k(n)f_k(n+1)=N\sum_{\substack{w_1\leq N,w_2\leq N+1\\(w_1,w_2)=1}}\frac{g_k(w_1w_2)}{(w_1w_2)^2}+O_{\varepsilon}(N^{\varepsilon})
$$
Letting $w=w_1w_2$, we have
$$
\sum\limits_{1\leq i\leq n}f_k(n)f_k(n+1)=N\sum_{w\leq N(N+1)}\frac{g_k(w)\tau^*(w;N)}{w^2}+O_{\varepsilon}(N^{\varepsilon}), 
$$
where
$$
\tau^*(w;N)=\sum\limits_{\substack{w=w_1w_2\\ (w_1,w_2)=1\\w_1\leq N,w_2\leq N+1}}1.
$$
Note that for $w\leq N$, we have $\tau^*(w;N)=\tau^*(n)$, where $\tau^*(n)$ is given by \eqref{def: unitary divisor function}. Then we have
\begin{align*}
\sum\limits_{1\leq i\leq n}f_k(n)f_k(n+1)&=N\sum_{w\leq N}\frac{g_k(w)\tau^*(w)}{w^2}+N\sum_{w>N}\frac{g_k(w)\tau^*(w;N)}{w^2}+O_{\varepsilon}(N^{\varepsilon})\\
&=N\sum_{w\leq N}\frac{g_k(w)\tau^*(w)}{w^2}+O_{\varepsilon}(N^{\varepsilon}),
\end{align*}
where we have used bounds $\tau^*(w;N)\leq \tau^*(w)$, \eqref{eq: bound for tau*(n)}, \eqref{eq: bound for gk(n)} and
$$
\sum_{w>N}\frac{g_k(w)\tau^*(w;N)}{w^2}\ll \sum_{w>N}\frac{\tau^2(w)}{w^2}\ll N^{-1+\varepsilon}
$$
for any $\varepsilon>0$. Extending the range of the sum over $w$, we obtain
$$
\sum\limits_{1\leq n\leq N}f_k(n)f_k(n+1)=N\sum_{w=1}^{\infty}\frac{g_k(w)\tau^*(w)}{w^2}+O\bigg(N\sum_{w>N}\frac{g_k(w)\tau^*(w)}{w^2}\bigg)+O_{\varepsilon}(N^{\varepsilon}). 
$$
With the help of bounds \eqref{eq: bound for tau*(n)} and \eqref{eq: bound for gk(n)} again, we estimate the first $O$-term in the above and obtain
$$
\sum\limits_{1\leq n\leq N}f_k(n)f_k(n+1)=N\sum_{w=1}^{\infty}\frac{g_k(w)\tau^*(w)}{w^2}+O_{\varepsilon}(N^{\varepsilon}). 
$$
This together with \eqref{eq: euler product of gk tau*/n^2} gives our desired result.
\end{proof}


\section{Proof of Theorem \ref{thm:S_n=}}\label{sec:3}
According to Lemma \ref{lemma:sum}, to prove Theorem \ref{thm:S_n=}, it is sufficient to compute the expectation and estimate the variance of $\overline{S}_n$, respectively. We first prove the following result.
\begin{lemma}\label{lem: mean of C_alpha(m,n) with k-free condition}
Let $0<\alpha<1$ is fixed. For any integers $k\geq 1$ and $a,b,n$ with $b,n\in\mathbb{N}$, we have
$$
\sum_{\substack{0\leq m\leq n\\ \gcd(m+a,b)\ {\rm{is}}\ k-{\rm{free}}}}C_{\alpha}(n,m)=f_k(b)+O_\alpha\Big(\frac{\tau_3(b)}{\sqrt{n}}\Big), 
$$
where $C_{\alpha}(n,m)$ is given by \eqref{eq: C_alpha(n,s)=}.
\end{lemma}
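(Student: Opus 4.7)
The plan is to detect the $k$-freeness condition by M\"obius inversion, exchange summations, and then reduce to the residue-class sum handled by Lemma \ref{lemma: the sum of mod}. Concretely, starting from the identity
$$
\mathbb{1}[\gcd(m+a,b)\ \text{is}\ k\text{-free}]=\sum_{d^k\mid\gcd(m+a,b)}\mu(d)=\sum_{d^k\mid b}\mu(d)\,\mathbb{1}[d^k\mid m+a],
$$
I would swap the order of summation to write
$$
\sum_{\substack{0\le m\le n\\ \gcd(m+a,b)\ k\text{-free}}}C_\alpha(n,m)=\sum_{d^k\mid b}\mu(d)\sum_{\substack{0\le m\le n\\ m\equiv -a\bmod d^k}}C_\alpha(n,m).
$$

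First, I would evaluate the inner sum. If $d^k\le n$, Lemma \ref{lemma: the sum of mod} (applied with modulus $d^k$ and residue $-a\bmod d^k$) gives
$$
\sum_{\substack{0\le m\le n\\ m\equiv -a\bmod d^k}}C_\alpha(n,m)=\frac{1}{d^k}+O_\alpha\!\Big(\frac{1}{\sqrt{n}}\Big).
$$
If $d^k>n$, I would instead observe that the arithmetic progression contains at most one value of $m$ in $[0,n]$, so by the local central limit theorem (Lemma \ref{lemma:The local central limit theorem}) the inner sum is already $O_\alpha(n^{-1/2})$; since $1/d^k<1/n\le 1/\sqrt{n}$ in that range, the same asymptotic $1/d^k+O_\alpha(n^{-1/2})$ persists. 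Combining both cases and summing over $d$ with $d^k\mid b$ yields
$$
\sum_{\substack{0\le m\le n\\ \gcd(m+a,b)\ k\text{-free}}}C_\alpha(n,m)=\sum_{d^k\mid b}\frac{\mu(d)}{d^k}+O_\alpha\!\Big(\frac{1}{\sqrt{n}}\sum_{d^k\mid b}|\mu(d)|\Big).
$$

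Finally, I would identify the main term with $f_k(b)$ and bound the error. For the main term, I can use \eqref{eq:f_k(i) and g_k(w)}, namely $f_k(b)=\sum_{w\mid b}g_k(w)/w$, together with the fact that $g_k(w)=\mu(d)$ when $w=d^k$ with $d$ squarefree and $g_k(w)=0$ otherwise; thus $\sum_{d^k\mid b}\mu(d)/d^k=f_k(b)$ (alternatively, both sides are multiplicative and agree on prime powers by a one-line computation). For the error term, the $d$'s appearing are squarefree divisors of $b$ (via $d\mid b$), so $\sum_{d^k\mid b}|\mu(d)|\le\tau_2(b)\le\tau_3(b)$. This yields the stated bound $O_\alpha(\tau_3(b)/\sqrt{n})$. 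The only mildly delicate point is the bookkeeping for $d^k>n$, but as noted above this reduces directly to the local central limit theorem; nothing else in the argument is more than routine.
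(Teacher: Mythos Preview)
Your argument is correct, but it follows a different path from the paper. The paper partitions the sum according to the exact value $d=\gcd(m+a,b)$, restricts to $k$-free $d$, and then invokes the general Lemma~\ref{lemma:mean1} (the case $l=1$) to evaluate each piece; summing the resulting error $\tau_2(b/d)/\sqrt{n}$ over $d\mid b$ produces $\tau_3(b)/\sqrt{n}$ on the nose. You instead detect $k$-freeness via the identity $\mathbb{1}[w\text{ is }k\text{-free}]=\sum_{d^k\mid w}\mu(d)$ and go straight to Lemma~\ref{lemma: the sum of mod} (plus Lemma~\ref{lemma:The local central limit theorem} for large moduli), bypassing Lemma~\ref{lemma:mean1} entirely. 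Your route is shorter and even yields the slightly sharper error $O_\alpha(\tau_2(b)/\sqrt n)$ before you relax it to $\tau_3(b)$; the paper's route has the advantage of reusing the same general lemma that is needed anyway for the twin case (Lemma~\ref{lemma: two k-free}), so no extra machinery is introduced. Your identification of the main term with $f_k(b)$ via $g_k(w)=\mu(d)$ when $w=d^k$ with $d$ squarefree (and $g_k(w)=0$ otherwise) is exactly right.
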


\begin{proof}
For simplicity, denote the left hand side of the above equation by $R$. Then we have
$$
R:=\sum_{\substack{d\mid b\\d\ {\rm{is}}\ k-{\rm{free}}}}\sum_{\substack{0\leq m\leq n\\ \gcd(m+a,b)=d}}C_{\alpha}(n,m).
$$
Applying Lemma \ref{lemma:mean1} to the sum over $m$, we obtain
$$
R=\sum_{\substack{d\mid b\\d\ {\rm{is}}\ k-{\rm{free}}}}\bigg(\sum_{rd\mid b}\frac{\mu (r)}{rd}+O_\alpha\Big(\frac{\tau_2(b/d)}{\sqrt{n}}\Big)\bigg)
$$
The contribution of the $O$-term to $R$ is
$$
\ll_{\alpha} \frac{1}{\sqrt{n}}\sum_{d\mid b}\tau_2(b/d)=\frac{\tau_3(b)}{\sqrt{n}}.
$$
Hence, we have
$$
R=f_k(b)+O_\alpha\Big(\frac{\tau_3(b)}{\sqrt{n}}\Big),
$$
where $f_k(b)$ is given by \eqref{eq: definition of f_k(n)}. This completes our proof.
\end{proof}

\subsection{The expectation of $\overline{S}_n$}
In this subsection, we compute the expectation $\mathbb{E}(\overline{S}_n)$ and prove the following result.
\begin{proposition}\label{prop: mean of Sn}
For integer $k\geq 1$ and $\alpha\in (0,1)$, we have
$$
\mathbb{E}(\overline{S}_n)=\frac{1}{\zeta(2k)}+O_{k,\alpha}(n^{-1/2+\varepsilon})
$$
for any $\varepsilon>0$ and $n\geq 1$.
\end{proposition}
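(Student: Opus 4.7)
The plan is to compute the single-step expectation $\mathbb{E}(X_i)$ via Lemma \ref{lem: mean of C_alpha(m,n) with k-free condition} and then average over $i$ using Lemma \ref{lem: mean of f_k(n)}. After $i$ steps the walker sits at $P_i = (s, i-s)$ with probability $C_\alpha(i,s)$, where $s$ is the number of right-steps. Since $\gcd(s, i-s) = \gcd(s, i)$, the event $\{X_i = 1\}$ is exactly the event that $\gcd(s, i)$ is $k$-free, so
$$
\mathbb{E}(X_i) = \sum_{\substack{0 \leq s \leq i \\ \gcd(s, i)\ \text{is}\ k\text{-free}}} C_\alpha(i, s).
$$
This is precisely the sum estimated by Lemma \ref{lem: mean of C_alpha(m,n) with k-free condition} with $a = 0$, $b = i$ (and the lemma's internal parameter taken to be $i$), yielding $\mathbb{E}(X_i) = f_k(i) + O_\alpha(\tau_3(i)/\sqrt{i})$.

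Averaging gives $\mathbb{E}(\overline{S}_n) = \frac{1}{n} \sum_{i=1}^n \mathbb{E}(X_i)$. The main term is extracted by Lemma \ref{lem: mean of f_k(n)}:
$$
\frac{1}{n} \sum_{i=1}^n f_k(i) = \frac{1}{\zeta(2k)} + O_\varepsilon(n^{-1+\varepsilon}).
$$
For the error term, the crude bound $\tau_3(i) = O_\varepsilon(i^\varepsilon)$ from \eqref{eq: bound for tau_l(n)} gives
$$
\frac{1}{n} \sum_{i=1}^n \frac{\tau_3(i)}{\sqrt{i}} \ll_\varepsilon \frac{1}{n} \sum_{i=1}^n i^{\varepsilon - 1/2} \ll_\varepsilon n^{-1/2 + \varepsilon}.
$$
Combining the two estimates gives $\mathbb{E}(\overline{S}_n) = 1/\zeta(2k) + O_{k,\alpha}(n^{-1/2 + \varepsilon})$, as claimed.

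There is essentially no genuine obstacle here: all the arithmetic and probabilistic content has already been compressed into Lemma \ref{lem: mean of C_alpha(m,n) with k-free condition} (which converts the $k$-free gcd condition into a M\"{o}bius sum and invokes Lemma \ref{lemma:mean1} to replace each binomial tail by its asymptotic density up to an $O(1/\sqrt{i})$ error) and Lemma \ref{lem: mean of f_k(n)} (the mean value of $f_k$ via its Dirichlet series). The only point that needs mild care is a potential notational clash, since the parameter called $n$ in Lemma \ref{lem: mean of C_alpha(m,n) with k-free condition} here plays the role of the step count $i$ rather than the outer averaging length $n$ in $\overline{S}_n$.
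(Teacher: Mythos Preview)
Your proof is correct and follows essentially the same route as the paper: compute $\mathbb{E}(X_i)$ by recognizing it as the sum in Lemma~\ref{lem: mean of C_alpha(m,n) with k-free condition} (with $a=0$, $b=i$), obtain $\mathbb{E}(X_i)=f_k(i)+O_\alpha(\tau_3(i)/\sqrt{i})$, then average and apply Lemma~\ref{lem: mean of f_k(n)} together with the divisor bound~\eqref{eq: bound for tau_l(n)}. The paper's argument is identical up to notation (it uses $l$ where you use $s$).
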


To prove Proposition \ref{prop: mean of Sn}, we first write
\begin{align}\label{eq: E(Sn)=(1)}
\mathbb{E}(\overline{S}_n)=\frac{1}{n}\sum_{1\leq i\leq n}\mathbb{E}(X_i).
\end{align}
For each $1\leq i\leq n$, by the definition of $X_i$, we have
\begin{align*}
\mathbb{E}(X_i)=\mathbb{P}(P_i\ {\rm{is}}\  k-{\rm{free}}),
\end{align*}
where $P_i=(x_i,y_i)$ is the coordinate of the $\alpha$-random walker at the $i$-th step. Observe that $x_i+y_i=i$, thus we can write $P_i=(l,i-l)$ for some $l=0,1,\dots ,i$. The probability that $P_i=(l,i-l)$ is
$$
\mathbb{P}\big(P_i=(l,i-l)\big)=C_{\alpha}(i,l),
$$
where $C_{\alpha}(i,l)$ is given by \eqref{eq: C_alpha(n,s)=}. 
By Lemma \ref{lem: mean of C_alpha(m,n) with k-free condition} and $\gcd(l,i-l)=\gcd(l,i)$, we have
\begin{align}\label{eq:E(Xi)}
\mathbb{E}(X_i)=\sum_{\substack{0\leq l\leq i\\ \gcd(l,i)\ {\rm{is}}\ k-{\rm{free}}}}C_{\alpha}(i,l)=f_k(i)+O_\alpha\Big(\frac{\tau_3(i)}{\sqrt{i}}\Big),
\end{align}
where $f_k$ is given by \eqref{eq: definition of f_k(n)}. Inserting \eqref{eq:E(Xi)} to (\ref{eq: E(Sn)=(1)}), we obtain
$$
\mathbb{E}(\overline{S}_n)=\frac{1}{n}\sum_{1\leq i\leq n}f_k(i)+O_\alpha\Big(\frac{1}{n}\sum\limits_{1\leq i\leq n}\frac{\tau_3(i)}{\sqrt{i}}\Big). 
$$
Using bound \eqref{eq: bound for tau_l(n)} to estimate the $O$-term, we obtain
\begin{align}\label{eq: mean of S_n=(1)}
\mathbb{E}(\overline{S}_n)=\frac{1}{n}\sum_{1\leq i\leq n}f_k(i)+O_\alpha(n^{-1/2+\varepsilon})
\end{align}
for any $\varepsilon>0$. This together with Lemma \ref{lem: mean of f_k(n)} yields Proposition \ref{prop: mean of Sn}.

\subsection{Estimating the variance of $\overline{S}_n$}
In this subsection, we estimate the variance of $\overline{S}_n$ and prove the following result.
\begin{proposition}\label{prop: variance of Sn}
For integer $k\geq 1$ and $\alpha\in (0,1)$, we have
$$
\mathbb{V}(\overline{S}_n)=O_{k,\alpha}(n^{-1/2+\varepsilon})
$$
for any $\varepsilon>0$ and $n\geq 1$.
\end{proposition}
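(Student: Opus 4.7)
The plan is to decompose the variance in the standard way as
\[
\mathbb{V}(\overline{S}_n)=\frac{1}{n^2}\sum_{1\leq i,j\leq n}\bigl(\mathbb{E}(X_iX_j)-\mathbb{E}(X_i)\mathbb{E}(X_j)\bigr),
\]
peel off the diagonal $i=j$ trivially (each term contributes at most $1$, giving $O(1/n)$ in total), and concentrate on the off-diagonal pairs with $i<j$. For such a pair I would like to show
\[
\mathbb{E}(X_iX_j)-\mathbb{E}(X_i)\mathbb{E}(X_j)\ll_{\alpha}\frac{\tau_3(i)\tau_3(j)}{\sqrt{i}}+\frac{\tau_3(j)}{\sqrt{j-i}},
\]
after which Lemma \ref{lem: estimetes involving tau_3} immediately bounds the double sum by $O_{\varepsilon}(n^{3/2+\varepsilon})$, and division by $n^{2}$ yields the claimed $O_{k,\alpha}(n^{-1/2+\varepsilon})$.

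To obtain the displayed covariance bound I would exploit the Markov property of the $\alpha$-random walk: conditional on $P_i=(l,i-l)$, the increment $P_j-P_i=(s,j-i-s)$ is independent of $P_i$ and distributed as an $\alpha$-random walk of length $j-i$. Since the $x$-coordinate of $P_j$ is $l+s$ and $\gcd(l+s,j-l-s)=\gcd(l+s,j)$, this gives
\[
\mathbb{E}(X_iX_j)=\sum_{\substack{0\leq l\leq i\\ \gcd(l,i)\text{ is }k\text{-free}}}C_\alpha(i,l)\sum_{\substack{0\leq s\leq j-i\\ \gcd(l+s,j)\text{ is }k\text{-free}}}C_\alpha(j-i,s).
\]
I would then apply Lemma \ref{lem: mean of C_alpha(m,n) with k-free condition} to the inner sum (with parameters $a=l$, $b=j$, length $j-i$) to replace it by $f_k(j)+O_\alpha(\tau_3(j)/\sqrt{j-i})$, using that the outer weights $C_\alpha(i,l)$ sum to at most $1$ to absorb the error; next I apply the same lemma to the outer sum to replace the main term by $f_k(i)f_k(j)+O_\alpha(\tau_3(i)|f_k(j)|/\sqrt{i})$, and I combine this with the bound $|f_k(j)|\leq\tau_3(j)$ from \eqref{eq: bound for fk(n)}. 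A parallel computation based on \eqref{eq:E(Xi)} gives $\mathbb{E}(X_i)\mathbb{E}(X_j)=f_k(i)f_k(j)+O_\alpha(\tau_3(i)\tau_3(j)/\sqrt{i})$ (using $j>i$), and subtracting the two expressions makes the main terms cancel and leaves precisely the two error terms above.

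The main technical point to watch is the correct use of the independence of the two pieces of the walk, so that the double sum over $(l,s)$ really factors after the gcd conditions are re-expressed in the form required by Lemma \ref{lem: mean of C_alpha(m,n) with k-free condition}; in particular the identity $\gcd(l+s,j-l-s)=\gcd(l+s,j)$ is what makes the inner sum fit that lemma with the parameter $b$ equal to the step number $j$ rather than to the coordinate. Once this is set up, the rest is bookkeeping: the cancellation of the $f_k(i)f_k(j)$ term from $\mathbb{E}(X_iX_j)$ against the corresponding term from $\mathbb{E}(X_i)\mathbb{E}(X_j)$ is the reason we only need the square-root savings in the covariance, and Lemma \ref{lem: estimetes involving tau_3} is tailored exactly to the two resulting error shapes. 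Combining with Lemma \ref{lemma:sum} and Proposition \ref{prop: mean of Sn} then completes the proof of Theorem \ref{thm:S_n=}.
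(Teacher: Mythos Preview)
Your proposal is correct and follows essentially the same approach as the paper: the same Markov decomposition of $\mathbb{E}(X_iX_j)$, the same two applications of Lemma~\ref{lem: mean of C_alpha(m,n) with k-free condition}, and the same appeal to Lemma~\ref{lem: estimetes involving tau_3} for the error sums. The only cosmetic difference is that you cancel the $f_k(i)f_k(j)$ main term pointwise in the covariance before summing, whereas the paper computes $\sum_{i<j}\mathbb{E}(X_iX_j)$ and $\mathbb{E}^2\bigl(\sum_iX_i\bigr)$ separately (each equal to $\tfrac12(\sum_if_k(i))^2$ plus acceptable error) and then subtracts.
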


To prove Proposition \ref{prop: variance of Sn}, we first write
$$
\mathbb{V}(\overline{S}_n)=\mathbb{E}(\overline{S}_n^2)-\mathbb{E}^2(\overline{S}_n).
$$	
Further, by the definition of $\overline{S}_n$, we have
\begin{equation}\label{eq:V(S_n)}
\mathbb{V}(\overline{S}_n)=\frac{1}{n^2}\sum_{1\leq i\leq n}\mathbb{E}({X_i}^2)+\frac{2}{n^2}\sum_{1\leq i\textless j\leq n}\mathbb{E}(X_iX_j)-\frac{1}{n^2}\mathbb{E}^2\Big(\sum_{1\leq i\leq n}{X_i}\Big).
\end{equation}

For the first term on the right hand side of \eqref{eq:V(S_n)}, since $\mathbb{E}(X_i^2)=\mathbb{E}(X_i)$  for each $1\leq i\leq n$ by the definition of $X_i$. Then we have 
\begin{align}\label{eq:E(X_i^2)}
\sum_{1\leq i\leq n}\mathbb{E}({X_i}^2)=\sum_{1\leq i\leq n}\mathbb{E}({X_i})=O(n),
\end{align}
where we have used $\mathbb{E}(X_i)=\mathbb{P}(X_i)=O(1)$ for any $1\leq i\leq n$.

For the third term on the right hand side of \eqref{eq:V(S_n)}, by the definition of $\overline{S}_n$ and \eqref{eq: mean of S_n=(1)}, we obtain
\begin{align}\label{eq: sum E^2sum Xi=}
\mathbb{E}^2\Big(\sum_{1\leq i\leq n}{X_i}\Big)=\Big(\sum_{1\leq i\leq n}f_k(i)\Big)^2+O_{\varepsilon}(n^{3/2+\varepsilon}),
\end{align}
for any $\varepsilon>0$, where we have used Lemma \ref{lem: mean of f_k(n)} to obtain the $O$-term in the above.

Now we deal with the second term on the right hand side of \eqref{eq:V(S_n)}. For $1\leq i< j\leq n$, let $P_i$ and $P_j$ be the coordinates of the $i$-th and $j$-th steps of a path of the $\alpha$-random walker, respectively. Here, we remark that $P_j$ depends on $P_i$. By the definition of $X_i$, we have
\begin{align*}
\mathbb{E}(X_iX_j)=\mathbb{P}(P_i,P_j\ {\rm{are\ both}}\  k-{\rm{free}}).
\end{align*}
Note that $P_i=(l,i-l)$ for some $0\leq l\leq i$, then we have $P_j=(l+m,j-l-m)$ for some $0\leq m\leq j-i$. 
The probability that $P_i$ and $P_j$ are both $k$-free is
$$
\sum\limits_{\substack{0\leq l\leq i\\\gcd(l,i-l)\ {\rm is}\ k-{\rm free}}}\ \sum\limits_{\substack{0\leq m\leq j-i\\\gcd(l+m,j-l-m)\ {\rm is}\ k-{\rm free}}}\mathbb{P}\big(P_i=(l,i-l),P_j=(l+m,j-l-m)\big).
$$
Note that
$$
\mathbb{P}\big(P_i=(l,i-l),P_j=(l+m,j-l-m)\big)=C_{\alpha}(i,l)C_{\alpha}(j-i,m).
$$
Since $\gcd(l,i-1)=\gcd(l,i)$ and $\gcd(l+m,j-l-m)=\gcd(l+m,j)$, then we have
\begin{align}\label{eq: E(XiXj)=}
\mathbb{E}(X_iX_j)=\sum\limits_{\substack{0\leq l\leq i\\\gcd(l,i)\ {\rm is}\ k-{\rm free}}}C_{\alpha}(i,l)T_k(l,i,j,\alpha),
\end{align}
where
$$
T_k(l,i,j,\alpha)=\sum_{\substack{0\leq m\leq j-i\\\gcd(l+m,j)\ {\rm{is}}\ k-{\rm{free}}}}C_{\alpha}(j-i,m).
$$
For $T_k(l,i,j,\alpha)$, applying Lemma \ref{lem: mean of C_alpha(m,n) with k-free condition} to the sum over $m$, we obtain
\begin{align}\label{eq: T_k=}
T_k(l,i,j,\alpha)=&f_k(j)+O_{\alpha}\Big(\frac{\tau_3(j)}{\sqrt{j-i}}\Big),
\end{align}
where $f_k(j)$ is given by \eqref{eq: definition of f_k(n)}. Inserting \eqref{eq: T_k=} to \eqref{eq: E(XiXj)=}, we obtain
$$
\mathbb{E}(X_iX_j)=\sum\limits_{\substack{0\leq l\leq i\\\gcd(l,i)\ {\rm is}\ k-{\rm free}}}C_{\alpha}(i,l)\bigg(f_k(j)+O_{\alpha}\Big(\frac{\tau_3(j)}{\sqrt{j-i}}\Big)\bigg).
$$
By the binomial theorem, the contribution of the $O$-term to $\mathbb{E}(X_iX_j)$ is
$O(\tau_3(j)/\sqrt{j-i})$. Hence, we have
$$
\mathbb{E}(X_iX_j)=f_k(j)\sum\limits_{\substack{0\leq l\leq i\\\gcd(l,i)\ {\rm is}\ k-{\rm free}}}C_{\alpha}(i,l)+O_{\alpha}\Big(\frac{\tau_3(j)}{\sqrt{j-i}}\Big).
$$
Using Lemma \ref{lem: mean of C_alpha(m,n) with k-free condition} again, we obtain
\begin{align*}
\mathbb{E}(X_iX_j)=&f_k(j)\bigg(f_k(i)+O_{\alpha}\Big(\frac{\tau_3(i)}{\sqrt{i}}\Big)\bigg)+O_{\alpha}\Big(\frac{\tau_3(j)}{\sqrt{j-i}}\Big)\\
=&f_k(i)f_k(j)+O_{\alpha}\Big(\frac{\tau_3(j)\tau_3(i)}{\sqrt{i}}\Big)+O_{\alpha}\Big(\frac{\tau_3(j)}{\sqrt{j-i}}\Big),
\end{align*}
where we have used bound \eqref{eq: bound for fk(n)}. Summing over $1\leq i<j\leq n$ and using Lemma \ref{lem: estimetes involving tau_3} to estimate the contribution of the above $O$-terms, we obtain 
$$
\sum_{1\leq i< j\leq n}\mathbb{E}(X_iX_j)=\sum_{1\leq i< j\leq n}f_k(i)f_k(j)+O_{\alpha,\varepsilon}(n^{3/2+\varepsilon})
$$
for any $\varepsilon>0$. Note that
\begin{align*}
2\sum_{1\leq i< j\leq n}f_k(i)f_k(j)=\big(\sum_{1\leq i\leq n}f_k(i)\big)^2-\sum_{1\leq i\leq n}f_k^2(i)=\big(\sum_{1\leq i\leq n}f_k(i)\big)^2+O_{\varepsilon}(n^{1+\varepsilon}), 
\end{align*}
where we have used
$$
\sum_{1\leq i\leq n}f_k^2(i)\ll\sum_{1\leq i\leq n}\tau_3^2(i)\ll_{\varepsilon} n^{1+\varepsilon}.
$$
Then we have
\begin{align}\label{eq: sum E(XiXj)=}
\sum_{1\leq i< j\leq n}\mathbb{E}(X_iX_j)=\frac{1}{2}\Big(\sum_{1\leq i\leq n}f_k(i)\Big)^2+O_{\alpha,\varepsilon}(n^{3/2+\varepsilon}).
\end{align}
Now Proposition \ref{prop: variance of Sn} follows from combining \eqref{eq:E(X_i^2)}, \eqref{eq: sum E(XiXj)=} and \eqref{eq: sum E^2sum Xi=} with \eqref{eq:V(S_n)}.

\section{Proof of Theorem \ref{thm:T_n=}}
The frame of the proof of Theorem \ref{thm:T_n=} is similar to the proof of Theorem \ref{thm:S_n=}. According to Lemma \ref{lemma:sum}, it is also sufficient to compute the expectation and estimate the variance of $\overline{T}_n$, respectively. We first prove the following counterpart of Lemma \ref{lem: mean of C_alpha(m,n) with k-free condition}.

\begin{lemma}\label{lemma: two k-free}
For $\alpha\in(0,1)$ and any integers $a_1,a_2,b_1,b_2,n,k$ with $b_1,b_2,n,k\in\mathbb{N}$ and $\gcd(b_1,b_2)=1$, we have 
$$
\sum_{\substack{0\leq m\leq n\\ \gcd(m+a_1,b_1)\ {\rm{is}}\ k-{\rm{free}}\\\gcd(m+a_2,b_2)\ {\rm{is}}\ k-{\rm{free}}}}C_{\alpha}(n,m)=f_k(b_1)f_k(b_2)+O_\alpha\Big(\frac{\tau_3(b_1)\tau_3(b_2)}{\sqrt{n}}\Big), 
$$
where where $C_{\alpha}(n,m)$ is given by \eqref{eq: C_alpha(n,s)=} and $f_k$ is given by \eqref{eq: definition of f_k(n)}. 
\end{lemma}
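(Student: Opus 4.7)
The strategy is to mirror the proof of Lemma~\ref{lem: mean of C_alpha(m,n) with k-free condition}, using Lemma~\ref{lemma:mean1} with $l=2$ instead of $l=1$. The hypothesis $\gcd(b_1,b_2)=1$ is precisely what enables the application of Lemma~\ref{lemma:mean1} in the two-moduli setting.

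First, I would partition the sum according to the values $d_j := \gcd(m+a_j, b_j)$ for $j=1,2$, noting that each $d_j$ must be a $k$-free divisor of $b_j$. Denoting the left-hand side by $R$, this gives
$$
R = \sum_{\substack{d_1 \mid b_1,\ d_1\ k\text{-free} \\ d_2 \mid b_2,\ d_2\ k\text{-free}}} \sum_{\substack{0 \leq m \leq n \\ \gcd(m+a_j, b_j) = d_j,\ j=1,2}} C_{\alpha}(n,m).
$$
Since $b_1, b_2$ are coprime and $d_j \mid b_j$, the hypothesis of Lemma~\ref{lemma:mean1} is satisfied with $l=2$, $u_j = b_j$. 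Applying it to the inner sum, I obtain
$$
\sum_{\substack{0 \leq m \leq n \\ \gcd(m+a_j, b_j) = d_j,\ j=1,2}} C_{\alpha}(n,m) = \frac{1}{d_1 d_2} \sum_{\substack{r_1 \mid b_1/d_1 \\ r_2 \mid b_2/d_2}} \frac{\mu(r_1)\mu(r_2)}{r_1 r_2} + O_{\alpha}\Big(\frac{\tau_2(b_1/d_1)\tau_2(b_2/d_2)}{\sqrt{n}}\Big).
$$

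Second, I would sum the resulting main term over $d_1, d_2$. Because the $d_j$ and $r_j$ variables separate across $j=1,2$, the double sum factors as a product, and each factor, after renaming $r_j d_j = w_j$, matches the definition \eqref{eq: definition of f_k(n)}:
$$
\sum_{\substack{d_j \mid b_j \\ d_j\ k\text{-free}}} \frac{1}{d_j} \sum_{r_j \mid b_j/d_j} \frac{\mu(r_j)}{r_j} = \sum_{\substack{r_j d_j \mid b_j \\ d_j\ k\text{-free}}} \frac{\mu(r_j)}{r_j d_j} = f_k(b_j).
$$
Hence the accumulated main term is exactly $f_k(b_1) f_k(b_2)$.

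Third, I would collect the error contributions. Summing the $O$-term over $d_1 \mid b_1$ and $d_2 \mid b_2$ and invoking the identity $\sum_{d \mid b} \tau_2(b/d) = \tau_3(b)$, the total error is bounded by
$$
\frac{1}{\sqrt{n}} \sum_{d_1 \mid b_1} \tau_2(b_1/d_1) \sum_{d_2 \mid b_2} \tau_2(b_2/d_2) = \frac{\tau_3(b_1)\tau_3(b_2)}{\sqrt{n}},
$$
with the implied constant depending only on $\alpha$. Combining the main term and the error yields the claim. I do not expect a serious obstacle: the only subtle point is checking that the coprimality $\gcd(b_1,b_2)=1$ indeed forces $r_1 d_1$ and $r_2 d_2$ to be coprime so that Lemma~\ref{lemma:mean1} applies, but this is immediate since $r_j d_j \mid b_j$.
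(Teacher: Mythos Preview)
Your proposal is correct and follows essentially the same approach as the paper's own proof: partition by the $k$-free divisors $d_j=\gcd(m+a_j,b_j)$, apply Lemma~\ref{lemma:mean1} with $l=2$, factor the main term into $f_k(b_1)f_k(b_2)$, and bound the accumulated error via $\sum_{d\mid b}\tau_2(b/d)=\tau_3(b)$.
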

\begin{proof}
For simplicity, denote the left hand side of the above equation by $R$. Then we have
$$
R:=\sum\limits_{\substack{d_1\mid b_1,\ d_2\mid b_2\\ d_1,\ d_2{~{\rm are}~} k-{\rm free}}}\sum_{\substack{0\leq m\leq n\\ \gcd(a_1+m,b_1)=d_1\\ \gcd(a_2+m,b_2)=d_2}}C_{\alpha}(n,m).
$$
By Lemma \ref{lemma:mean1}, we obtain
$$
R=\sum\limits_{\substack{d_1\mid b_1,\ d_2\mid b_2\\ d_1,\ d_2{~{\rm are}~} k-{\rm free}}}\bigg(\frac{1}{d_1d_2}\sum\limits_{\substack{r_1\mid (b_1/d_1)\\ r_2\mid (b_2/d_2)}}\frac{\mu(r_1)\mu(r_2)}{r_1r_2}+O_\alpha\Big(\frac{\tau_2(b_1/d_1)\tau_2(b_2/d_2)}{\sqrt{n}}\Big)\bigg).
$$
The contribution of the $O$-term to $R$ is
$$
\ll_\alpha \frac{1}{\sqrt{n}}\Big(\sum\limits_{d_1\mid b_1}\tau_2(b_1/d_1)\Big)\Big(\sum\limits_{d_2\mid b_2}\tau_2(b_2/d_2)\Big)=\frac{\tau_3(b_1)\tau_3(b_2)}{\sqrt{n}}.
$$
Then we have
$$
R=\bigg(\sum\limits_{\substack{r_1d_1\mid b_1\\ d_1{~{\rm is}~} k-{\rm free}}}\frac{\mu(r_1)}{r_1d_1}\bigg)\bigg(\sum\limits_{\substack{r_2d_2\mid b_2\\ d_2{~{\rm is}~} k-{\rm free}}}\frac{\mu(r_2)}{r_2d_2}\bigg)+O_{\alpha}\Big(\frac{\tau_3(b_1)\tau_3(b_2)}{\sqrt{n}}\Big),
$$
which gives our desired result.
\end{proof}
\subsection{The expectation of $\overline{T}_n$}
In this subsection, we compute the expectation $\mathbb{E}(\overline{T}_n)$ and prove the following result.

\begin{proposition}\label{prop: mean of Tn}
For $\alpha\in (0,1)$ and positive integers $k,n$, we have 
$$
\mathbb{E}(\overline{T}_n)=\prod_{p}\Big(1-\frac{2}{p^{2k}}\Big)+O_{k,\alpha}(n^{-1/2+\varepsilon})
$$
for any $\varepsilon>0$.
\end{proposition}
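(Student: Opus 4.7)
\textbf{Proof proposal for Proposition~\ref{prop: mean of Tn}.} By linearity,
$$\mathbb{E}(\overline{T}_n)=\frac{1}{n}\sum_{1\leq i\leq n}\mathbb{E}(X_iX_{i+1}),$$
so it suffices to obtain a uniform asymptotic for $\mathbb{E}(X_iX_{i+1})$. The plan mirrors the computation of $\mathbb{E}(X_iX_j)$ in Section~\ref{sec:3}: I condition on $P_i=(l,i-l)$, which has probability $C_\alpha(i,l)$, and then branch on the $(i+1)$-th step of the walk. A right step (probability $\alpha$) sends the walker to $P_{i+1}=(l+1,i-l)$, whose gcd equals $\gcd(l+1,i+1)$; an up step (probability $1-\alpha$) sends it to $P_{i+1}=(l,i-l+1)$, whose gcd equals $\gcd(l,i+1)$. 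Weighting by the step probabilities yields
$$\mathbb{E}(X_iX_{i+1})=\alpha\sum_{\substack{0\leq l\leq i\\ \gcd(l,i),\,\gcd(l+1,i+1)\ k\text{-free}}}C_\alpha(i,l)+(1-\alpha)\sum_{\substack{0\leq l\leq i\\ \gcd(l,i),\,\gcd(l,i+1)\ k\text{-free}}}C_\alpha(i,l).$$

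The decisive observation is that in both sums the two moduli are $b_1=i$ and $b_2=i+1$, which are automatically coprime. Hence Lemma~\ref{lemma: two k-free} applies directly -- in the first sum with shifts $(a_1,a_2)=(0,1)$ and in the second with $(a_1,a_2)=(0,0)$ -- producing in each case
$$f_k(i)f_k(i+1)+O_\alpha\!\left(\frac{\tau_3(i)\tau_3(i+1)}{\sqrt{i}}\right).$$
Combining the two contributions and using $\alpha+(1-\alpha)=1$ gives
$$\mathbb{E}(X_iX_{i+1})=f_k(i)f_k(i+1)+O_\alpha\!\left(\frac{\tau_3(i)\tau_3(i+1)}{\sqrt{i}}\right).$$

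Averaging over $1\leq i\leq n$, the main term is evaluated directly by Lemma~\ref{lem: mean of f_k(n)f_k(n+1)} to $\prod_p(1-2/p^{2k})+O_\varepsilon(n^{-1+\varepsilon})$, while the error is controlled via the divisor bound $\tau_3(m)=O_\varepsilon(m^\varepsilon)$ in \eqref{eq: bound for tau_l(n)}:
$$\frac{1}{n}\sum_{1\leq i\leq n}\frac{\tau_3(i)\tau_3(i+1)}{\sqrt{i}}\ll_\varepsilon\frac{1}{n}\sum_{1\leq i\leq n}i^{-1/2+\varepsilon}\ll n^{-1/2+\varepsilon}.$$
This delivers the claimed asymptotic $\mathbb{E}(\overline{T}_n)=\prod_p(1-2/p^{2k})+O_{k,\alpha}(n^{-1/2+\varepsilon})$. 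There is no serious obstacle beyond the initial step-by-step decomposition; once one sees that both branches feature the coprime pair $(i,i+1)$ as moduli, the rest is an immediate invocation of Lemma~\ref{lemma: two k-free} combined with the mean-value Lemma~\ref{lem: mean of f_k(n)f_k(n+1)} already established in Section~2.
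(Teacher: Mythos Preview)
Your proof is correct and follows essentially the same route as the paper: condition on $P_i=(l,i-l)$, split according to the $(i+1)$-th step into the two sums $M_1,M_2$ with coprime moduli $i$ and $i+1$, apply Lemma~\ref{lemma: two k-free} to each, and then invoke Lemma~\ref{lem: mean of f_k(n)f_k(n+1)} after averaging. The only differences are notational (the paper packages the branching via $C_\alpha(1,m)$ and labels the two sums $M_1,M_2$), so no changes are needed.
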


By the definition of $\overline{T}_n$ and the linearity of the expectation, we have
\begin{align}\label{eq:E(T_n)}
	\mathbb{E}(\overline{T}_n)=\frac{1}{n}\sum_{1\leq i\leq n}\mathbb{E}(X_iX_{i+1}). 
\end{align}
For each $1\leq i\leq n$, by the definition of $X_i$, we have 
 \begin{align}\label{eq:E(XiXi+1)1}
 \mathbb{E}(X_iX_{i+1})=\mathbb{P}(P_i,P_{i+1}\ {\rm{are\ both}}\  k-{\rm{free}}),
 \end{align}
where $P_i,\ P_{i+1}$ are the $i$-th and $(i+1)$-th steps of the $\alpha$-random walker, respectively. It is the same as the expectation of $\overline{S}_n$. We denote $P_i=(l,i-l)$ for $0\leq l\leq i$. Thus we have $P_{i+1}=(l+m,i+1-l-m)$ for $m\in \{0,1\}$. The probability that $P_i,\ P_{i+1}$ both are $k$-free is of the form 
$$
\sum_{\substack{0\leq l\leq i\\ \gcd(l,i-l)\ {\rm{is}}\ k-{\rm{free}}}}C_{\alpha}(i,l)\sum_{\substack{0\leq m\leq 1\\ \gcd(l+m,i+1-l-m)\ {\rm{is}}\ k-{\rm{free}}}}C_{\alpha}(1,m). 
$$
Since $\gcd(l,i-l)=\gcd(l,i)$ and $\gcd(l+m,i+1-l-m)=\gcd(l+m,i+1)$, by \eqref{eq:E(XiXi+1)1}, we have 
\begin{align}\label{eq: E(X_iX_{i+1})=} 
	\mathbb{E}(X_iX_{i+1})=\sum_{\substack{0\leq l\leq i\\ \gcd(l,i)\ {\rm{is}}\ k-{\rm{free}}}}C_{\alpha}(i,l)\sum_{\substack{0\leq m\leq 1\\ \gcd(l+m,i+1)\ {\rm{is}}\ k-{\rm{free}}}}C_{\alpha}(1,m),
\end{align}
which gives
\begin{align}\label{eq:E(X_iX_i+1)2}
\mathbb{E}(X_iX_{i+1})=(1-\alpha)M_1+\alpha M_2,
\end{align}
where
\begin{align*}
M_1=\sum_{\substack{0\leq l\leq i\\ \gcd(l,i)\ {\rm{is}}\ k-{\rm{free}}\\ \gcd(l,i+1)\ {\rm{is}}\ k-{\rm{free}}}}C_{\alpha}(i,l)
\end{align*}
and
\begin{align*}
M_2=\sum_{\substack{0\leq l\leq i\\ \gcd(l,i)\ {\rm{is}}\ k-{\rm{free}}\\ \gcd(l+1,i+1)\ {\rm{is}}\ k-{\rm{free}}}}C_{\alpha}(i,l). 
\end{align*}
We apply Lemma \ref{lemma: two k-free} to $M_1$ and obtain 
\begin{align}\label{eq:M_1}
	M_1=f_k(i)f_k(i+1)+O_\alpha\Big(\frac{\tau_3(i)\tau_3(i+1)}{\sqrt{i}}\Big).
\end{align}
Similarly, by Lemma \ref{lemma: two k-free} again, we also have 
\begin{align}\label{eq:M_2}
	M_2=f_k(i)f_k(i+1)+O_\alpha\Big(\frac{\tau_3(i)\tau_3(i+1)}{\sqrt{i}}\Big).
\end{align}
Inserting \eqref{eq:M_1} and \eqref{eq:M_2} to \eqref{eq:E(X_iX_i+1)2} and with help of bound \eqref{eq: bound for tau_l(n)}, we  obtain
\begin{align}\label{eq:E(X_iX_i+1)3}
\mathbb{E}(X_iX_{i+1})=f_k(i)f_k(i+1)+O_{\alpha,\varepsilon}(i^{-1/2+\varepsilon})
\end{align}
for any $\varepsilon>0$. We then have 
\begin{align}\label{eq:4the sum of E(X_iX_{i+1})}
\sum_{1\leq i\leq n}\mathbb{E}(X_iX_{i+1})=\sum_{1\leq i\leq n}f_k(i)f_k(i+1)+O_{\alpha,\varepsilon}(n^{1/2+\varepsilon}).
\end{align}
Inserting this to \eqref{eq:E(T_n)} and applying Lemma \ref{lem: mean of f_k(n)f_k(n+1)} to the sum over $i$, we obtain our desired result.

\subsection{Estimating the variance of $\overline{T}_n$}

In this subsection, we estimate the variance of $\overline{T}_n$ and prove the following result.

\begin{proposition}\label{prop: variance of Tn}
For any $\alpha\in (0,1)$, $\varepsilon>0$ and integer $\ k\geq 1$ , we have
$$
\mathbb{V}(\overline{T}_n)=O_{k,\alpha}(n^{-1/2+\varepsilon}).
$$
\end{proposition}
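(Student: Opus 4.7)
The plan is to mimic the proof of Proposition \ref{prop: variance of Sn}, but with the pair product $X_i X_{i+1}$ in place of the single $X_i$, relying on Lemma \ref{lemma: two k-free} (together with $\gcd(i,i+1)=1$ and $\gcd(j,j+1)=1$) in place of Lemma \ref{lem: mean of C_alpha(m,n) with k-free condition}, and on Lemma \ref{lem: estimetes (2) involving tau_3} in place of Lemma \ref{lem: estimetes involving tau_3}. Start from
$$
\mathbb{V}(\overline{T}_n)=\frac{1}{n^2}\sum_{1\leq i,j\leq n}\mathbb{E}(X_iX_{i+1}X_jX_{j+1})-\mathbb{E}^2(\overline{T}_n),
$$
split the double sum into the diagonal part $|i-j|\leq 1$ and the off-diagonal part $i+1<j$ (the symmetric case $j+1<i$ being analogous), and observe that $|X_iX_{i+1}X_jX_{j+1}|\leq 1$ trivially bounds each diagonal term by $1$, contributing $O(n)$ to the double sum and hence $O(1/n)$ to $\mathbb{V}(\overline{T}_n)$.

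For the off-diagonal part, condition on $P_i=(l,i-l)$, then on the increment $m\in\{0,1\}$ from step $i$ to step $i+1$, then on the position $P_j$ reached in $j-i-1$ further steps, and finally on the increment $t\in\{0,1\}$ from $j$ to $j+1$. Using $\gcd(l,i-l)=\gcd(l,i)$, $\gcd(l+m,i+1-l-m)=\gcd(l+m,i+1)$ and the analogous identities at step $j$, the four-point probability factors as a product of two independent binomial sums. Applying Lemma \ref{lemma: two k-free} to the inner sum (over the $j-i-1$ steps) gives $f_k(j)f_k(j+1)+O_\alpha(\tau_3(j)\tau_3(j+1)/\sqrt{j-i-1})$, and applying Lemma \ref{lemma: two k-free} once more to the outer sum (over the first $i$ steps) gives
$$
\mathbb{E}(X_iX_{i+1}X_jX_{j+1})=f_k(i)f_k(i+1)f_k(j)f_k(j+1)+O_\alpha\!\left(\tfrac{\tau_3(i)\tau_3(i+1)\tau_3(j)\tau_3(j+1)}{\sqrt{i}}+\tfrac{\tau_3(j)\tau_3(j+1)}{\sqrt{j-i-1}}\right),
$$
where bound \eqref{eq: bound for fk(n)} is used to absorb the cross term from multiplying out the two error factors.

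Summing over $1\leq i$, $i+1<j\leq n$, the two $O$-terms contribute $O_{\alpha,\varepsilon}(n^{3/2+\varepsilon})$ by the two inequalities of Lemma \ref{lem: estimetes (2) involving tau_3}. For the main term, use the identity
$$
2\sum_{\substack{1\leq i,\,i+1<j\leq n}}f_k(i)f_k(i+1)f_k(j)f_k(j+1)=\Big(\sum_{1\leq i\leq n}f_k(i)f_k(i+1)\Big)^2+O_\varepsilon(n^{1+\varepsilon}),
$$
where the error comes from removing the diagonal contributions $i=j$ and $|i-j|=1$, each bounded by $\sum_i \tau_3(i)^2\tau_3(i+1)^2\ll n^{1+\varepsilon}$ via \eqref{eq: bound for tau_l(n)}. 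Combining with the diagonal contribution, one obtains
$$
\sum_{1\leq i,j\leq n}\mathbb{E}(X_iX_{i+1}X_jX_{j+1})=\Big(\sum_{1\leq i\leq n}f_k(i)f_k(i+1)\Big)^2+O_{\alpha,\varepsilon}(n^{3/2+\varepsilon}).
$$

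Finally, by \eqref{eq:E(T_n)} and \eqref{eq:4the sum of E(X_iX_{i+1})} one has
$$
\mathbb{E}^2(\overline{T}_n)=\frac{1}{n^2}\Big(\sum_{1\leq i\leq n}f_k(i)f_k(i+1)\Big)^2+O_{\alpha,\varepsilon}(n^{-1/2+\varepsilon}),
$$
so subtraction gives $\mathbb{V}(\overline{T}_n)=O_{k,\alpha}(n^{-1/2+\varepsilon})$ as required. The main technical obstacle is the double application of Lemma \ref{lemma: two k-free}: one has to verify that at each application the coprimality assumption $\gcd(b_1,b_2)=1$ holds (which is true since $\gcd(i,i+1)=\gcd(j,j+1)=1$), and that the two resulting error terms can be bounded separately by the two estimates of Lemma \ref{lem: estimetes (2) involving tau_3} after the cross product is linearized via \eqref{eq: bound for fk(n)}.
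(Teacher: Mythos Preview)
Your proposal is correct and follows essentially the same approach as the paper's proof: the same decomposition into diagonal and off-diagonal parts, the same conditioning on the increments $m\in\{0,1\}$ and $t\in\{0,1\}$ followed by two applications of Lemma~\ref{lemma: two k-free}, and the same use of Lemma~\ref{lem: estimetes (2) involving tau_3} and \eqref{eq:4the sum of E(X_iX_{i+1})} to finish. The only cosmetic difference is that the paper organizes the variance as $\frac{1}{n^2}\sum_i+\frac{2}{n^2}\sum_{i<j}-\frac{1}{n^2}\mathbb{E}^2(\sum X_iX_{i+1})$ and recognizes the outer sum as $M_5=\mathbb{E}(X_iX_{i+1})$ rather than re-deriving it, but this is the same computation.
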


By the definition of $\overline{T}_n$, we have 
\begin{align*}
	\mathbb{V}(\overline{T}_n)=\frac{1}{n^2}\Big(\sum_{1\leq i,j\leq n}\mathbb{E}(X_iX_{i+1}X_jX_{j+1})-\mathbb{E}^2(\sum_{1\leq i \leq n}X_iX_{i+1})\Big).
\end{align*}
It follows that
\begin{align}\label{eq:V(T_n)}
\mathbb{V}(\overline{T}_n)=\frac{1}{n^2}\sum_{1\leq i \leq n}\mathbb{E}(X_i^2X_{i+1}^2)+\frac{2}{n^2}\sum_{1\leq i<j \leq n}\mathbb{E}(X_iX_{i+1}X_jX_{j+1})-\frac{1}{n^2}\mathbb{E}^2\Big(\sum_{1\leq i \leq n}X_iX_{i+1}\Big).
\end{align}

For the first term on the right hand side of \eqref{eq:V(T_n)}, by the definition of $X_i$, we have 
\begin{align}\label{eq:2the sum of E(X_i^2X_{i+1}^2)}
\sum_{1\leq i \leq n}\mathbb{E}(X_i^2X_{i+1}^2)=\sum_{1\leq i \leq n}\mathbb{E}(X_iX_{i+1})=O(n), 
\end{align}
where we have used $\mathbb{E}(X_iX_{i+1})=\mathbb{P}(P_i,P_{i+1}{\rm ~are~both~ }k-{\rm free})=O(1)$ for any $1\leq i\leq n$.

For the third term on the right hand side of \eqref{eq:V(T_n)}, by \eqref{eq:4the sum of E(X_iX_{i+1})}, we have 
\begin{align}\label{eq:5E^2(the sum of X_iX_{i+1})}
\mathbb{E}^2(\sum_{1\leq i \leq n}X_iX_{i+1})=\Big(\sum_{1\leq i \leq n}f_k(i)f_k(i+1)\Big)^2+O_{\alpha,\varepsilon}(n^{3/2+\varepsilon})
\end{align}
for any $\varepsilon>0$, where we have used Lemma \ref{lem: mean of f_k(n)f_k(n+1)} to obtain the $O$-term in the above.

Now we deal with the second term on the right hand side of \eqref{eq:V(T_n)}. Note that
\begin{align}\label{eq:V(T_n)2}
\sum_{1\leq i<j \leq n}\mathbb{E}(X_iX_{i+1}X_jX_{j+1})=\sum_{1\leq i \leq n}\sum_{i+1<j\leq n}\mathbb{E}(X_iX_{i+1}X_jX_{j+1})+O(n),
\end{align}
since $\mathbb{E}(X_iX_{i+1}X_{i+2})=\mathbb{P}(P_i,\ P_{i+1},\ P_{i+2}\ {\rm are}\ k-{\rm{free}})=O(1)$.
For $i+1<j$, we have
\begin{align*}
\mathbb{E}&(X_iX_{i+1}X_jX_{j+1})=\sum_{\substack{0\leq l\leq i\\\gcd(l,i)\ {\rm{is}}\ k-{\rm{free}}}}C_{\alpha}(i,l)\sum_{\substack{0\leq m_1\leq 1\\ \gcd(l+m_1,i+1)\ {\rm{is}}\ k-{\rm{free}}}}C(1,m_1)\\
&\sum_{\substack{0\leq m_2\leq j-i-1\\ \gcd(l+m_1+m_2,j)\ {\rm{is}}\ k-{\rm{free}}}}C(j-i-1,m_2)\sum_{\substack{0\leq m_3\leq 1\\ \gcd(l+m_1+m_2+m_3,j+1)\ {\rm{is}}\ k-{\rm{free}}}}C(1,m_3).\nonumber 
\end{align*}
Writing the inner sum over $m_3$ explicitly, we have
\begin{align}\label{eq:E(X_iX_{i+1}X_jX_{j+1})1}
\mathbb{E}(X_iX_{i+1}X_jX_{j+1})=&\sum_{\substack{0\leq l\leq i\\\gcd(l,i)\ {\rm{is}}\ k-{\rm{free}}}}C_{\alpha}(i,l)\sum_{\substack{0\leq m_1\leq 1\\ \gcd(l+m_1,i+1)\ {\rm{is}}\ k-{\rm{free}}}}C_{\alpha}(1,m_1)\big((1-\alpha)M_3+\alpha M_4\big),
\end{align}
where
\begin{align*}
M_3=\sum_{\substack{0\leq m_2\leq j-i-1\\ \gcd(l+m_1+m_2,j)\ {\rm{is}}\ k-{\rm{free}}\\ \gcd(l+m_1+m_2,j+1)\ {\rm{is}}\ k-{\rm{free}}}}C_{\alpha}(j-i-1,m_2)
\end{align*}
and
\begin{align*}
M_4=\sum_{\substack{0\leq m_2\leq j-i-1\\ \gcd(l+m_1+m_2,j)\ {\rm{is}}\ k-{\rm{free}}\\\gcd(l+m_1+m_2+1,j+1)\ {\rm{is}}\ k-{\rm{free}}}}C_{\alpha}(j-i-1,m_2).
\end{align*}
Applying Lemma \ref{lemma: two k-free} to the sum $M_3$ over $m_2$, we obtain 
\begin{align}\label{eq:M_3}
	M_3=f_k(j)f_k(j+1)+O_{\alpha}\Big(\frac{\tau_3(j)\tau_3(j+1)}{\sqrt{j-i-1}}\Big). 
\end{align}
Similarly, applying Lemma \ref{lemma: two k-free} again to the sum $M_4$ over $m_2$, we also obtain 
\begin{align}\label{eq:M_4}
	M_4=f_k(j)f_k(j+1)+O_{\alpha}\Big(\frac{\tau_3(j)\tau_3(j+1)}{\sqrt{j-i-1}}\Big). 
\end{align}
Inserting \eqref{eq:M_3}, \eqref{eq:M_4} to \eqref{eq:E(X_iX_{i+1}X_jX_{j+1})1}, we obtain
\begin{align*}
\mathbb{E}(X_iX_{i+1}X_jX_{j+1})=&\sum_{\substack{0\leq l\leq i\\\gcd(l,i)\ {\rm{is}}\ k-{\rm{free}}}}C_{\alpha}(i,l)\sum_{\substack{0\leq m_1\leq 1\\ \gcd(l+m_1,i+1)\ {\rm{is}}\ k-{\rm{free}}}}C_{\alpha}(1,m_1)\\
&\Big(f_k(j)f_k(j+1)+O_{\alpha}\Big(\frac{\tau_3(j)\tau_3(j+1)}{\sqrt{j-i-1}}\Big)\Big).
\end{align*}
By the binomial theorem and bound \eqref{eq: bound for tau_l(n)}, we have
$$
\mathbb{E}(X_iX_{i+1}X_jX_{j+1})=f_k(j)f_k(j+1)M_5+O_{\alpha}\Big(\frac{\tau_3(j)\tau_3(j+1)}{\sqrt{j-i-1}}\Big),
$$
where
$$
M_5=\sum_{\substack{0\leq l\leq i\\ \gcd(l,i)\ {\rm{is}}\ k-{\rm{free}}}}C_{\alpha}(i,l)\sum_{\substack{0\leq m_1\leq 1\\ \gcd(l+m_1,i+1)\ {\rm{is}}\ k-{\rm{free}}}}C_{\alpha}(1,m_1). 
$$
Note that $M_5$ is the same as $E(X_iX_{i+1})$ given by $\eqref{eq: E(X_iX_{i+1})=}$. Then by \eqref{eq:E(X_iX_i+1)3} we have 
\begin{align*}
M_5=f_k(i)f_k(i+1)+O_{\alpha}\Big(\frac{\tau_3(i)\tau_3(i+1)}{\sqrt{i}}\Big). 
\end{align*}
It follows that
\begin{align*}
\mathbb{E}(X_iX_{i+1}X_jX_{j+1})=&f_k(i)f_k(i+1)f_k(j)f_k(j+1)+O_{\alpha}\Big(\frac{\tau_3(i)\tau_3(i+1)\tau_3(j)\tau_3(j+1)}{\sqrt{i}}\Big)\\
&+O_{\alpha}\Big(\frac{\tau_3(j)\tau_3(j+1)}{\sqrt{j-i-1}}\Big),
\end{align*}
where we have used
$$
f_k(j)f_k(j+1)\ll \tau_3(j)\tau_3(j+1). 
$$
Summing over $1\leq i\leq n$ and $i+1<j\leq n$, we obtain
\begin{align*}
\sum_{1\leq i \leq n}\sum_{i+1<j\leq n}\mathbb{E}(X_iX_{i+1}X_jX_{j+1})=&V_k(n)+O_{\alpha}\Big(\sum_{1\leq i \leq n}\sum_{i+1<j\leq n}\frac{\tau_3(i)\tau_3(i+1)\tau_3(j)\tau_3(j+1)}{\sqrt{i}}\Big)\\
&+O_{\alpha}\Big(\sum_{1\leq i \leq n}\sum_{i+1<j\leq n}\frac{\tau_3(j)\tau_3(j+1)}{\sqrt{j-i-1}}\Big),\nonumber
\end{align*}
where
$$
V_k(n)=\sum_{1\leq i \leq n}\sum_{i+1<j\leq n}f_k(i)f_k(i+1)f_k(j)f_k(j+1).
$$
By Lemma \ref{lem: estimetes (2) involving tau_3}, the contribution of $O$-terms is $O_\varepsilon(n^{3/2+\varepsilon})$  for any $\varepsilon>0$. Then we have
$$
\sum_{1\leq i \leq n}\sum_{i+1<j\leq n}\mathbb{E}(X_iX_{i+1}X_jX_{j+1})=V_k(n)+O_\varepsilon(n^{3/2+\varepsilon}).
$$
For $V_k(n)$, replacing $i+1<j\leq n$ by $i<j\leq n$ up to an error term $\ll_{\varepsilon} n^{1+\varepsilon}$, we have
$$
2V_k(n)=2\sum_{1\leq i<j\leq n}f_k(i)f_k(i+1)f_k(j)f_k(j+1)+O_\varepsilon(n^{1+\varepsilon}),
$$
which gives
\begin{align*}
	2V_k(n)&=\Big(\sum_{1\leq i \leq n}f_k(i)f_k(i+1)\Big)^2-\sum_{1\leq i \leq n}\big(f_k(i)f_k(i+1)\big)^2+O_\varepsilon(n^{1+\varepsilon})\\
	&=\Big(\sum_{1\leq i \leq n}f_k(i)f_k(i+1)\Big)^2+O_\varepsilon(n^{1+\varepsilon}). 
\end{align*}
Hence, we have 
\begin{align}\label{eq:4the sum of E(X_iX_{i+1}X_jX_{j+1})}
\sum_{1\leq i \leq n}\sum_{i+1<j\leq n}\mathbb{E}(X_iX_{i+1}X_jX_{j+1})=\frac{1}{2}\Big(\sum_{1\leq i \leq n}f_k(i)f_k(i+1)\Big)^2+O_{\varepsilon}(n^{3/2+\varepsilon}).
\end{align}
Inserting \eqref{eq:2the sum of E(X_i^2X_{i+1}^2)}, \eqref{eq:5E^2(the sum of X_iX_{i+1})} and \eqref{eq:4the sum of E(X_iX_{i+1}X_jX_{j+1})} to \eqref{eq:V(T_n)}, we obtain
\begin{align*}
\mathbb{V}(\overline{T}_n)=O_{k,\alpha}(n^{-1/2+\varepsilon})
\end{align*}
for any $\varepsilon>0$. This completes our proof.

\end{document}